\newcommand{\N}{\mathbb{N}}
\newcommand{\Z}{\mathbb{Z}}
\newcommand{\R}{\mathbb{R}}
\newcommand{\Lip}{\mathrm{Lip}}
\newcommand{\Id}{\mathrm{Id}}
\newcommand{\la}{\lambda}
\newcommand{\al}{\alpha}
\newcommand{\gm}{\gamma}
\newcommand{\eps}{\varepsilon}
\newcommand{\T}{\mathbb{T}}
\newtheorem{Theorem}{Theorem}
\newtheorem{Definition}{Definition}[section]
\newtheorem{Proposition}[Definition]{Proposition}
\newtheorem{Lemma}[Definition]{Lemma}
\theoremstyle{definition}
\newtheorem{Remark}[Definition]{Remark}
\newcommand{\bthm}{\begin{Theorem}}
	\newcommand{\ethm}{\end{Theorem}}
\newcommand{\bpr}{\begin{Proposition}}
	\newcommand{\epr}{\end{Proposition}}
\newcommand{\blm}{\begin{Lemma}}
	\newcommand{\elm}{\end{Lemma}}
\newcommand{\bex}{\begin{Exercise}}
	\newcommand{\eex}{\end{Exercise}}
\newcommand{\be}{\begin{equation}}
	\newcommand{\ee}{\end{equation}}
\newcommand{\beal}{\begin{aligned}}
	\newcommand{\enal}{\end{aligned}}
\newcommand{\brm}{\begin{Remark}}
	\newcommand{\erm}{\end{Remark}}
\begin{document}

\title[Invariant Graphs for Dissipative Twist Maps]{  On the Dynamics of Invariant Graphs for Dissipative Twist Maps }

\author{Qi Li}
\address{School of Mathematics and Statistics, Beijing Institute of Technology, Beijing 100081, China}
\email{qilicindy@bit.edu.cn}

\author{Lin Wang}
\address{School of Mathematics and Statistics, Beijing Institute of Technology, Beijing 100081, China}
\email{lwang@bit.edu.cn}

\subjclass[2010]{Primary 37J40; Secondary 37E40}

\keywords{Dissipative twist maps, Normally hyperbolicity, Invariant graphs}

\begin{abstract}
For two-parameter families of dissipative twist maps, we investigate the dynamics of invariant graphs as well as the thresholds for their existence and breakdown. Our main results are as follows:
\begin{enumerate}
    \item For arbitrarily small $C^r$ perturbations with $r \geq 1$, invariant graphs with prescribed rotation numbers can be realized by adjusting the parameters;

    \item We characterize sharp perturbations that lead to the complete destruction of all invariant graphs;

    \item When the perturbation fails to be $C^1$, Lipschitz invariant graphs with non-differentiable points may still persist, even though the Lipschitz norm meets the conditions required by the normally hyperbolic invariant manifold theorem.
\end{enumerate}

\end{abstract}

\maketitle

\tableofcontents
	\section{\sc Introduction}
The investigation of dissipative dynamical systems has been profoundly shaped by the analysis of maps and flows, leading to significant breakthroughs in understanding their structural and behavioral properties. A cornerstone of this field was established by \cite{B}, who in 1932 proved the existence of attractors in dissipative twist maps. Subsequent studies have uncovered a rich spectrum of dynamical phenomena, encompassing periodic solutions, quasi-periodic orbits, KAM tori, and Aubry-Mather sets \cite{CCD1,CCD2,Ca,L1,Ma,MS}.


Recent developments have expanded the theoretical framework to higher-dimensional settings. \cite{AHV} successfully generalized the concept of Birkhoff attractors to dissipative maps in higher dimensions, utilizing sophisticated symplectic topological methods for their analysis. Complementary to these advances, rigorous examinations of conformal symplectic systems and their invariant submanifolds have yielded additional insights into these dynamical structures \cite{AA,AF}.

In this note, we focus on two-parameter families of dissipative twist maps  (see (\ref{Fp}) below) and investigate their dynamical properties under $C^r$ ($r \geq 1$) perturbations. Our main contributions are the following:

\begin{itemize}
    \item {\it  Theorem \ref{M1} and \ref{Mt3}:} We show the diversity of the dynamics on the invariant graphs of perturbed two-parameter families. For any prescribed rotation number, invariant graphs persist under sufficiently small $C^r$  perturbations after appropriate parameter adjustments. However, for a fixed system, the rotation number of the invariant graph  varies under small $C^r$ perturbations in a {\it generic} sense.

    \item {\it  Theorem \ref{M2}, \ref{M22} and \ref{M222}:} We characterize the critical perturbation threshold for the existence and breakdown of invariant graphs in a precise sense. When both normal hyperbolicity and perturbation size decay simultaneously, the persistence or destruction of invariant graphs depends on the ratio of their decay rates. This {\it sharpness} is  quantified by the ratio.

        \item {\it  Theorem \ref{M3}:} While the classical normally hyperbolic invariant manifold (NHIM) theorem \cite[Page 52, Remark 2]{HPS} guarantees that $C^1$ invariant graphs persist under sufficiently small Lipschitz perturbations for $C^1$ normally hyperbolic systems, we construct a  counterexample demonstrating the {\it failure} of this conclusion when both normal hyperbolicity and perturbation size diminish simultaneously.

\end{itemize}

To state the main results in a more precise way, we need to introduce some notions and notations. We denote by $\T:=\R/\Z$.
\begin{Definition}[Dissipative twist map of the cylinder]\label{dissmap}
A {\it dissipative twist map}  of the (infinite) cylinder is a  $C^1$ diffeomorphism $f:\T \times \R \rightarrow \T \times \R  $ that admits a lift
$F:\R^{2}\rightarrow \R^2$, $F(x,y)=(X(x,y),Y(x,y))$, satisfying the following conditions:
\begin{itemize}
    \item[(i)]  ({\it Lift condition}) $F$ is isotopic to the identity;
    \item[(ii)] ({\it Twist condition}) The map $\psi:(x,y)\mapsto(x,X(x,y))$ is a diffeomorphism of $\R^2$;
    \item[(iii)]    ({\it Dissipative condition}) There exist $\lambda_1,\lambda_2\in (0,1)$ such that for all $(x,y)\in \R^2$
\[\lambda_1\leq \det(DF(x,y))\leq \lambda_2.\]
\end{itemize}
\end{Definition}

In this note, we investigate the dynamics of invariant graphs.

\begin{Definition}[$C^0$-invariant graph]\label{c0graph}
$\Gamma\subset\T\times\R$ is called a $C^0$-invariant graph of $f$ if
\begin{itemize}
\item[(i)] $\Gamma=\{(x,\Psi(x)):\; x \in \T\}$, where $\Psi:\T\to\R$ is a continuous function;
\item[(ii)] $\Gamma$ is invariant under the action of  $f$.
\end{itemize}
\end{Definition}

\begin{Remark}
{(i)} By the twist condition, if $f$ is of class $C^1$, then $\Psi$ is a Lipschitz function on $\T$ (see \cite[Proposition 2.2]{H1}).\\
{(ii)} Equivalently, if $F: \R^2 \to \R^2$ denotes a lift of $f$ and $\tilde{\Psi}:\R\to\R$  a $1$ lift of $\Psi$ (which is a $1$-periodic function on $\R$), then  the graph $\tilde{\Gamma}:=\{(x,\tilde{\Psi}(x)) |\ x\in\R\}$  is invariant by $F$.
\end{Remark}

Let $\alpha:=(\alpha_1,\alpha_2)\in \R^2$. Fix $\lambda\in (0,1)$. We consider $F_{\alpha}:=F_{\la,\al}$:
\begin{equation}\label{F1}
F_{\alpha}(x,y):=(x+\alpha_1+\lambda y,\alpha_2+\lambda y).
\end{equation}
The parameter $\lambda$ controls the dissipation, and $\alpha_1,\alpha_2$
 are constants that determine the translation in the $x$ and $y$ directions, respectively. For any parameter pair $(\alpha_1,\alpha_2) \in \mathbb{R}^2$, a direct calculation shows that the integrable system \eqref{F1} possesses a unique invariant graph
\begin{equation}\label{ingm1}
\Gamma = \mathbb{R} \times \left\{\frac{\alpha_2}{1-\lambda}\right\}.
\end{equation}
The map $F_{\alpha}$ restricted on $\Gamma$ reduces to a circle diffeomorphism $g_\al$ with rotation number $\al_1+\frac{\lambda}{1-\lambda}\al_2$. Note that the invariant graph $\Gamma$  is normally hyperbolic (immediately absolutely $r$-normally hyperbolic for any $r\in\N$ in terms of \cite[Definition 2]{HPS}). In fact, for each $(x,y)\in\R^2$,
\[DF(x,y)=\left(
            \begin{array}{cc}
              1 & \la \\
              0 & \la \\
            \end{array}
          \right)=P\left(
            \begin{array}{cc}
              1 & 0 \\
              0 & \la \\
            \end{array}
          \right)P^{-1},
\]
where $P=\left(
            \begin{array}{cc}
              1 & \frac{-\la}{1-\la} \\
              0 & 1 \\
            \end{array}
          \right)$. Hence, we have a $TF$-invariant splitting
\[T\mathbb{T}\times\R|_\Gamma=T\Gamma\oplus N^s.\]
   Moreover, there exists a constant $C$ such that for all $i\in\Z$ and $z\in \Gamma$, there hold
   \begin{equation}\label{hype}
   \frac{1}{C}\la^i|v|\leq |DF^i(z)v|\leq C\la^i|v|,\quad \mathrm{for}\ v\in N^s_z,
   \end{equation}
    \[\frac{1}{C}|v|\leq |DF^i(z)v|\leq C|v|,\quad \mathrm{for}\ v\in T_z\Gamma,\]
    where the constant $C$ can be estimated by the norms of $P$ and $P^{-1}$.

 In \cite{SoW}, the authors consider the following model:
\begin{equation}\label{Fp}
F^\phi_{\alpha}(x,y) = \big(x + \alpha_1 + \lambda y + \phi(x), \alpha_2 + \lambda y + \phi(x)\big),
\end{equation}
where $\phi$ is $1$-periodic and satisfies $\int_{0}^1 \phi(x) dx = 0$.

Clearly, $\det DF_{\al}^\phi(x,y)=\lambda$ for all $(x,y)\in \R^2$, hence this induces a family of dissipative twist maps. The invariant graphs of integrable two-parameter families of dissipative twist maps exhibit normal hyperbolicity. Their persistence under $C^r$ perturbations follows from the NHIM theorem  \cite{HPS}. For $\lambda \in (0,1)$, if an invariant graph exists, then it serves as the \emph{global attractor} of the system. Analogously, one can extend the corresponding results obtained in this note to the opposite case with $\lambda>1$ and obtain a dynamical characterization of invariant graphs with repelling properties.\\

If $\lambda=1$ and $\alpha=0$, then $F^{\phi}_{\lambda,\alpha}$ reduces to an exact area-preserving twist map. Specifically,
\begin{equation}\label{Fpar}
F^{\phi}(x,y) = (x + y+\phi(x), y + \phi(x)).
\end{equation}

The existence of an invariant graph for $F^{\phi}$ and the dynamics on this graph are closely related to the $C^r$ topology in which the perturbation is considered. When restricted to the invariant graph, $F^{\phi}$ becomes an orientation-preserving circle homeomorphism, and its dynamics are essentially determined by the rotation number's arithmetic properties. Furthermore, based on the {Denjoy--Herman--Yoccoz} theory~\cite{De,H11,Y1}, we can analyze finer dynamical properties on the invariant graphs, such as whether the system is conjugate to a rigid rotation, or even the precise sense in which such conjugacy holds.

 \vspace{1em}

\subsection{The dynamics on invariant graphs}
Recall that a number $\omega \in \mathbb{R}$ is called \emph{Diophantine} if there exist a constant \( D > 0 \), and an exponent \( \tau \geq 0 \), such that for all   $p,q\in \Z$ and $q\neq 0$,
\begin{equation}\label{diop}
 \left|\omega - \frac{p}{q}\right| \geq \frac{D}{|q|^{2+\tau}}.
 \end{equation}
The exponent $\tau$ is called the \emph{Diophantine exponent}. If $\tau = 0$, we say $\omega$ is of \emph{constant type}. Irrational numbers that are not Diophantine are called \emph{Liouville numbers}.

Herman \cite{H33} proved that if $\|\phi\|_{C^3}$ is sufficiently small, then the invariant graph with constant-type rotation number persists, and the restricted dynamics on it are $C^1$-conjugate to a rigid rotation. On the other hand, for any $\tau_0 > 0$, one can construct a perturbation $\phi_0$ that remains arbitrarily small in the $C^3$ topology but destroys the invariant graph with Diophantine exponent $\tau_0$ (see \cite{H1,W}).

Furthermore, Mather \cite{M4} showed that each invariant graph with Liouville rotation number can be destroyed by $C^\infty$-small perturbations. Forni \cite{F} later proved that for a special subclass of frequencies (a proper subset of non-Brjuno numbers), even $C^\omega$ (real-analytic) perturbations suffice to break the invariant graph.

For $\lambda \in (0,1)$, the system becomes dissipative. The unperturbed system admits a unique invariant graph, which is normally hyperbolic. Thus, the persistence of the invariant graph under perturbation is guaranteed by the NHIM theorem, provided the perturbation is sufficiently small in the $C^r$ topology ($r\geq 1$). A natural question arises:

\vspace{1ex}

\begin{itemize}
\item \textbf{Question 1:} {\it If the perturbation is  required to be $C^r$-small, how general can the dynamics on the persisted invariant graph be? In particular, can the rotation number be arbitrary?}
\end{itemize}

\vspace{1ex}

 In the conservative case, the answer is negative---for instance, under $C^3$-small perturbations, only invariant graphs with constant-type rotation numbers persist generally. This question serves as a key motivation for our investigation. We will demonstrate (see Theorem \ref{M1} below) that the answer is positive in the dissipative setting if we consider the family of $\{F^{\phi}_{(\al_1,\al_2)}\}_{(\al_1,\al_2)\in\R^2}$ with two parameters.

  In the case with $\al_2=0$, $F^{\phi}_{\al}$ is also referred as to an exact conformally symplecitc twist map.
\begin{Theorem}\label{M1}
Given $r\in [1,+\infty)$ and $0<\eps\ll 1$, there exists $\delta=O(\eps)$, such that if the perturbation $\phi$ satisfies $\|\phi\|_{C^r}\leq \delta$, then the following statements hold true.
\begin{itemize}
\item [(1)] The perturbed map $F^{\phi}_{\al}$ still admits a $C^r$-invariant graph $\tilde{\Gamma}:=\{(x,\tilde{\Psi}_{\al}(x))\ |\ x\in \R\}$. Moreover,
\[F^{\phi}_{\al}(x,\tilde{\Psi}_{\al}(x))=(g_{\al}(x), \tilde{\Psi}_{\al}(g_{\al}(x))),\]
where $g_{\al}(x):=x+\al_1+\lambda\tilde{\Psi}_{\al}(x)+\phi(x)$.
\item [(2)] For each $x\in \R$, the function $\tilde{\Psi}_\alpha(x)$ is uniformly Lipschitz with respect to the parameter $\alpha:=(\alpha_1,\alpha_2)\in \R^2$ and the Lipschitz constant is also independent of $x$.
\item [(3)] Denote $\rho(g_{\al})$ to be the rotation number of $g_{\al}$.  Given $\al_2\in\R$ (resp. $\al_1\in\R$), for each $\omega\in\R$, there exists $\bar{\al}_1\in\R$ (resp. $\bar{\al}_2\in\R$) such that $\rho(g_{(\bar{\al}_1,\al_2)})=\omega$ (resp. $\rho(g_{({\al}_1,\bar{\al}_2)})=\omega$).
\end{itemize}
\end{Theorem}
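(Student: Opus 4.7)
The plan is to carry out the three items in order, with the bulk of the analytic work handled by a parameterized graph-transform argument, and item~(3) finished off by an intermediate value argument that exploits the integer-translation equivariance of the family.

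For item~(1), I would realize $\tilde\Psi_\alpha$ as the unique fixed point of a graph transform. An invariant graph $y=\tilde\Psi(x)$ of $F^\phi_\alpha$ must satisfy
\[
\tilde\Psi\bigl(g(x)\bigr)=\alpha_2+\lambda\tilde\Psi(x)+\phi(x),\qquad g(x):=x+\alpha_1+\lambda\tilde\Psi(x)+\phi(x).
\]
For $\|\phi\|_{C^1}\le\delta$ small and $\tilde\Psi$ close in $C^1$ to the unperturbed graph $\alpha_2/(1-\lambda)$, the function $g$ is an orientation-preserving $C^r$ diffeomorphism of $\R$ commuting with integer translations, so inverting it converts the above into a fixed-point equation for a graph transform $\mathcal T_\alpha$ on the closed ball $\mathcal B$ of 1-periodic $C^r$ functions of $C^0$-radius $O(\delta)$ around the constant $\alpha_2/(1-\lambda)$. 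On $\mathcal B$ the operator $\mathcal T_\alpha$ is invariant and a $\lambda$-contraction in $C^0$, with contraction rate independent of $\alpha$, so Banach's theorem yields a unique fixed point $\tilde\Psi_\alpha$. The $C^r$ regularity follows either by invoking the NHIM theorem directly (the normal rate $\lambda<1$ dominates the tangential rate, which is close to $1$, for every $r\ge 1$) or by iterating the graph transform on jet bundles.

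Item~(2) is a standard parametric-contraction consequence. Since $F^\phi_\alpha$ depends on $\alpha=(\alpha_1,\alpha_2)$ affinely with unit coefficients, $\mathcal T_\alpha\Psi$ depends on $\alpha$ in a $C^0$-Lipschitz way with a constant $C$ that is uniform in $\Psi\in\mathcal B$ and uniform in the base point. Combined with the uniform contraction rate $\lambda$, this yields
$\|\tilde\Psi_\alpha-\tilde\Psi_{\alpha'}\|_{C^0}\le \frac{C}{1-\lambda}|\alpha-\alpha'|$, which is exactly the claimed uniform Lipschitz bound.

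For item~(3), I would observe that $F^\phi_{(\alpha_1+1,\alpha_2)}$ and $F^\phi_{(\alpha_1,\alpha_2)}$ differ by the integer translation $(1,0)$ and hence descend to the same diffeomorphism of $\T\times\R$. The uniqueness of the Lipschitz invariant graph then forces $\tilde\Psi_{(\alpha_1+1,\alpha_2)}\equiv\tilde\Psi_{(\alpha_1,\alpha_2)}$ as 1-periodic functions on $\R$, so $g_{(\alpha_1+1,\alpha_2)}=g_{(\alpha_1,\alpha_2)}+1$, and therefore $\rho(g_{(\alpha_1+1,\alpha_2)})=\rho(g_{(\alpha_1,\alpha_2)})+1$. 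Together with the continuity of the rotation number under $C^0$ perturbations of the circle map and the Lipschitz dependence from item~(2), the function $\alpha_1\mapsto\rho(g_{(\alpha_1,\alpha_2)})$ is continuous and unbounded above and below, so the intermediate value theorem produces a $\bar\alpha_1$ realizing any prescribed $\omega\in\R$. For the $\alpha_2$-direction, the linear change of variable $y\mapsto y-c/(1-\lambda)$ converts a shift of $\alpha_2$ by $c$ into a shift of $\alpha_1$ by $\lambda c/(1-\lambda)$, reducing to the previous case.

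The main obstacle, as I see it, is maintaining uniform-in-$\alpha$ control throughout: the contraction rate, the parameter-Lipschitz modulus of $\mathcal T_\alpha$, and the monotone/equivariant behavior of the rotation number all have to hold globally for $\alpha\in\R^2$ rather than only on compact sets. The affine dependence of $F^\phi_\alpha$ on $\alpha$ together with the integer-translation equivariance make this feasible, provided every Lipschitz and contraction estimate is tracked carefully through the parametric fixed-point argument so that no constant degrades as $|\alpha_1|\to\infty$.
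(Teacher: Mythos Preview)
Your proposal is correct and, for items~(2) and~(3), takes a genuinely different route from the paper.

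For item~(1) both arguments coincide: the paper simply invokes the NHIM theorem (its Proposition~\ref{nhim}), while you sketch the underlying graph transform; these are the same argument at different levels of detail.

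For item~(2) the paper does \emph{not} use the parametric-contraction estimate $\|\tilde\Psi_\alpha-\tilde\Psi_{\alpha'}\|\le \frac{C}{1-\lambda}|\alpha-\alpha'|$. Instead it conjugates $F_\alpha$ so that the graph $\tilde\Psi_{\alpha'}$ becomes the zero section, applies the implicit function theorem to locate a fibrewise fixed point $\sigma(x,\alpha)$ of the second component, and then shows that the slab $\{|y-\tilde\Psi_{\alpha'}(x)|\le \frac{1}{1-\lambda}|\alpha_2-\alpha'_2|\}$ is an attractor block in the sense of Conley, forcing the graph of $\tilde\Psi_\alpha$ to lie inside it. Your argument is shorter and more standard; the paper's buys the explicit constant $\tfrac{1}{1-\lambda}$ and the observation that the bound depends only on $|\alpha_2-\alpha'_2|$.

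For item~(3) the paper uses Herman's ergodic formula $\rho(g_\alpha)=\alpha_1+\lambda\int_\T\tilde\Psi_\alpha\,d\mu_\alpha+\int_\T\phi\,d\mu_\alpha$ together with the $C^1$-closeness of $\tilde\Psi_\alpha$ to $\alpha_2/(1-\lambda)$ to write $\rho(g_\alpha)=\alpha_1+\tfrac{\lambda\alpha_2}{1-\lambda}+O(\varepsilon)$, so that $\alpha_1\mapsto\rho$ (resp.\ $\alpha_2\mapsto\rho$) is continuous and tends to $\pm\infty$. Your integer-equivariance argument ($g_{(\alpha_1+1,\alpha_2)}=g_{(\alpha_1,\alpha_2)}+1$, hence $\rho$ shifts by $1$) combined with the conjugacy $\Phi(x,y)=(x,y-c/(1-\lambda))$ reducing an $\alpha_2$-shift to an $\alpha_1$-shift is more elementary: it avoids ergodic theory entirely and does not need the quantitative $C^1$-closeness from NHIM, only continuity of $\alpha\mapsto g_\alpha$ and uniqueness of the invariant graph. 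The paper's route, on the other hand, gives the approximate value of the rotation number, not just its existence.
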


\begin{Remark}
According to Theorem~\ref{M1}, the two-parameter family $F_{\al}$ defined by~\eqref{F1} preserves invariant graphs with arbitrary rotation numbers  (up to small deformations) under small perturbations in the $C^r$ ($r \geq 1$) topology. The regularity of these invariant graphs is entirely determined by the perturbation. From the construction of $F^{\phi}_{\alpha}$, the orientation-preserving circle homeomorphism induced by the perturbed map $\left.F^{\phi}_{\alpha}\right|_{\tilde{\Gamma}}$ restricted to the invariant graph $\tilde{\Gamma}$ shares the same regularity as the invariant graph itself. Consequently, classical circle map theory  can be also applied to obtain more refined dynamical properties on the invariant graph.
\end{Remark}

 According to the NHIM theorem, when subjected to sufficiently small $C^1$ perturbations, the system \eqref{F1} retains a unique perturbed invariant graph. While the NHIM theorem guarantees the persistence of an invariant graph under perturbation, it does not preserve the rotation number generically.
\begin{Theorem}\label{Mt3}
Given $r \in [1,+\infty)$ and a generic $\alpha_1 \in \mathbb{R}$, there exists a sequence of $C^\infty$ functions $\{\psi_n\}_{n \in \mathbb{N}}$ with $\|\psi_n\|_{C^r} \to 0$ as $n \to +\infty$ such that:
\begin{itemize}
    \item $F^{\psi_n}_{(\alpha_1,0)}$ admits an invariant graph $\tilde{\Gamma}_n$;
    \item The induced circle map on $\tilde{\Gamma}_n$ has a rational rotation number different from $\alpha_1\in\R\backslash\mathbb{Q}$.
\end{itemize}
\end{Theorem}

 \vspace{1em}

\subsection{The threshold for the existence of invariant graphs}

When the perturbation exceeds a certain threshold, this invariant graph undergoes breakdown. This naturally leads to the  question of determining the critical perturbation strength at which invariant graphs are either preserved or destroyed.

Drawing an analogy with conservative systems - where similar questions are addressed by KAM theory and converse KAM theory - we may investigate two distinct problems:
\begin{enumerate}
    \item [(1)] The critical perturbation required to destroy an invariant graph with prescribed rotation number
    \item [(2)] The critical perturbation needed to destroy all possible invariant graphs
\end{enumerate}

In view of Theorem \ref{Mt3},
 the second problem concerning the destruction of all invariant graphs is more natural to consider for the two-parameter family $\{F_{\al}\}_{\alpha\in\mathbb{R}^2}$. Specifically, we formulate the following question:

\vspace{1ex}

\begin{itemize}
\item \textbf{Question 2:} {\it What is the sharp perturbation required to cause the breakdown of all invariant graphs in the two-parameter family $\{F_{\al}\}_{\alpha\in\mathbb{R}^2}$?}
\end{itemize}

\vspace{1ex}

By the NHIM theorem, the perturbation in Question 2 cannot be arbitrarily small in the $C^1$ topology.   The main result obtained in \cite{SoW} can be stated as follows:

\begin{Proposition}\label{Mt}
Given $\lambda \in (0,1)$ and $0 < \varepsilon \ll 1$, there exists a sequence of trigonometric polynomials $\{\varphi^{\lambda}_n\}_{n \in \mathbb{N}}$ such that all $C^0$-invariant graphs for the family $\{F_{\al}\}_{\alpha \in \mathbb{R}^2}$ can be destroyed by perturbing the maps with $\{\varphi^{\lambda}_n\}_{n \in \mathbb{N}}$. Moreover, the following properties hold:
\begin{itemize}
    \item[(a)] $\|\varphi^{\lambda}_n\|_{C^{1 - \varepsilon}} = O\left(\frac{1}{n^\varepsilon}\right)$ as $n \to \infty$;
    \item[(b)] For every $n \in \mathbb{N}$, $\|\varphi^{\lambda}_n\|_{C^1} \leq 1$ for all $\lambda \in (0,1)$, and $\|\varphi^{\lambda}_n\|_{C^1} = O(1 - \lambda)$ as $\lambda \to 1^-$.
\end{itemize}
\end{Proposition}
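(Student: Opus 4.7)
The plan is to construct $\{\varphi_n^\la\}$ as high-frequency trigonometric polynomials whose $C^1$-norm is matched to the normally hyperbolic threshold $O(1-\la)$, and then to show by contradiction that no $\al\in\R^2$ admits a $C^0$-invariant graph for $F^{\varphi_n^\la}_\al$. First I would reduce to a one-parameter problem: the substitution $v := y-\al_2/(1-\la)$ makes $F^\varphi_\al$ conjugate to $(x,v)\mapsto(x+\beta+\la v+\varphi(x),\la v+\varphi(x))$ with $\beta := \al_1+\la\al_2/(1-\la)$, so a continuous invariant graph $v=\tilde\Psi(x)$ corresponds to a solution of
\[
\tilde\Psi(g(x))=\la\tilde\Psi(x)+\varphi(x),\qquad g(x):=x+\beta+\la\tilde\Psi(x)+\varphi(x).
\]
Subtracting the two coordinate updates yields the key identity $h\circ g=R_\beta$ with $h:=\mathrm{id}-\tilde\Psi$; since $F$ restricted to the graph is injective, $g$ is an orientation-preserving homeomorphism of $\T$, hence $h=R_\beta\circ g^{-1}$ is also a homeomorphism, and by the twist condition $\tilde\Psi$ is Lipschitz with $\tilde\Psi'(x)<1$ almost everywhere.

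I would then set
\[
\varphi_n^\la(x):=\frac{c\,(1-\la)}{n}\,P(nx)
\]
for a small absolute constant $c>0$ and a fixed smooth mean-zero trigonometric polynomial $P$ whose derivative $P'$ has a sharply concentrated profile. Bernstein's inequality gives $\|\varphi_n^\la\|_{C^0}=O((1-\la)/n)$ and $\|\varphi_n^\la\|_{C^1}=O(1-\la)$ uniformly in $n$, so choosing $c$ small ensures $\|\varphi_n^\la\|_{C^1}\le 1$ for every $\la\in(0,1)$, proving (b). Combined with the Hadamard interpolation $\|f\|_{C^{1-\eps}}\le C\,\|f\|_{C^0}^{\eps}\|f\|_{C^1}^{1-\eps}$, this yields $\|\varphi_n^\la\|_{C^{1-\eps}}=O(n^{-\eps})$, which is (a).

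To destroy all graphs I would iterate the cohomological equation backward along $g$; since $|\la|<1$ this produces the absolutely convergent series
\[
\tilde\Psi(x)=\sum_{k=1}^{\infty}\la^{k-1}\,\varphi_n^\la\!\bigl(g^{-k}(x)\bigr),
\]
in which the amplitude factor $(1-\la)$ of $\varphi_n^\la$ exactly cancels the $(1-\la)^{-1}$ produced by the geometric sum. Differentiating termwise and using the chain-rule consequence $(g^{-k})'(x)=\prod_{j=0}^{k-1}(1-\tilde\Psi'(g^{-j}(x)))$ of $h\circ g=R_\beta$, one obtains a self-consistent (nonlinear) equation for $\tilde\Psi'$. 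The polynomial $P$ is engineered so that, for every $\beta\in\R$, this equation forces $\tilde\Psi'(x_0)\ge 1$ at some $x_0$, contradicting the monotonicity of $h$. Two regimes are treated in parallel: for non-resonant $\beta$ the backward orbit $\{g^{-k}(x)\}$ equidistributes on $\T$ and the obstruction comes from Weyl-type averaging of $P'(n\,\cdot)$; for resonant $\beta$ close to a rational $p/q$ with $q\lesssim n$ one instead invokes the periodic identity $(1-\la^q)\tilde\Psi(x)=\sum_{k=0}^{q-1}\la^{q-1-k}\varphi_n^\la(g^k(x))$ together with the twist bound on $\tilde\Psi$.

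The main obstacle is producing a single $P$ which triggers this critical-threshold obstruction \emph{uniformly} in $\beta\in\R$. The resonant regime is the delicate one: the small denominators $1-\la^q\sim q(1-\la)$ associated with $\beta\approx p/q$ compete directly with the amplitude $(1-\la)/n$ of $\varphi_n^\la$, so the Fourier profile of $P$ must be tuned so that both regimes saturate the normally hyperbolic threshold simultaneously. This sharp matching is precisely what renders the bound $\|\varphi^\la_n\|_{C^1}=O(1-\la)$ in (b) optimal and validates the claim that $\{\varphi^\la_n\}$ destroys all $C^0$-invariant graphs of the two-parameter family $\{F_\al\}_{\al\in\R^2}$.
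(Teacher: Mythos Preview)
The paper does not prove Proposition~\ref{Mt}; it is quoted verbatim as the main result of \cite{SoW} and no argument is supplied here, so there is no proof in this paper to compare against.

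On its own merits, your proposal has a genuine gap. The reduction to the single parameter $\beta$, the identities $h\circ g=R_\beta$ with $h=\mathrm{id}-\tilde\Psi$ and $\tilde\Psi(x)=\sum_{k\ge 1}\lambda^{k-1}\varphi(g^{-k}(x))$, and the norm estimates (a)--(b) for the ansatz $\varphi_n^\lambda(x)=c(1-\lambda)n^{-1}P(nx)$ are all correct and routine. The gap is entirely in the destruction step: you assert that ``the polynomial $P$ is engineered so that \dots $\tilde\Psi'(x_0)\ge 1$'' but you neither construct such a $P$ nor verify the claim, and the heuristics you sketch for the two regimes are problematic. In the non-resonant case you say the obstruction comes from Weyl-type averaging of $P'(n\,\cdot)$, but $P$ has mean zero, so equidistribution drives the dominant part of $\sum_k\lambda^{k-1}P'(ng^{-k}(x))(g^{-k})'(x)$ toward zero; averaging works \emph{against} you, not for you, and you give no mechanism by which the nonlinear factor $(g^{-k})'$ would reverse this. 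In the resonant case your identity $(1-\lambda^q)\tilde\Psi(x)=\sum_{k=0}^{q-1}\lambda^{q-1-k}\varphi(g^k(x))$ presupposes $g^q(x)=x$, yet for $\beta$ merely close to $p/q$ the map $g$ (which depends nonlinearly on the unknown $\tilde\Psi$) need not have any $q$-periodic point, and even when it does the identity constrains $\tilde\Psi$ only at that point.

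You yourself name ``the main obstacle'' as producing a single $P$ that works uniformly in $\beta$ while matching the small denominators $1-\lambda^q\sim q(1-\lambda)$ against the amplitude $(1-\lambda)/n$; but your final paragraph is a description of what such a matching would have to accomplish, not a construction that accomplishes it. As written, the argument establishes (a) and (b) for your candidate family but does not prove the destruction claim.
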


Item~(a) in Proposition~\ref{Mt} is sharp, as the NHIM theorem ensures the persistence of invariant graphs under small $C^1$ perturbations. We will show that Item~(b) is  also sharp  to cause the complete breakdown of all invariant graphs. To illustrate what we mean by ``sharp," we consider the following question:

\vspace{1ex}

\begin{itemize}
\item \textbf{Question 3:} {\it  For all perturbations satisfying
\[
\|\phi^{\lambda}\|_{C^1}=O((1-\lambda)^{\gamma}), \quad \text{as} \quad \lambda\to 1^-,
\]
if there exists $\tilde{\al}\in\R^2$ such that $F^{\phi^\lambda}_{\tilde{\al}}$ admits an invariant graph, what is the infimum of all possible values of $\gamma$?}
\end{itemize}

\vspace{1ex}

Proposition~\ref{Mt} establishes that $\gamma \geq 1$ is necessary. We will prove the upper bound $\gamma \leq 2$.  Let $\varphi$ be a Lipschitz function on $\T$. Recall the Lipschitz semi-norm defined by:
\[\|\varphi\|_{\mathrm{Lip}}:=\sup_{x\neq y}\frac{|\varphi(x)-\varphi(y)|}{|x-y|},\]
which is also referred to as the Lipschitz constant of $\varphi$. If $\varphi$ is $1$-periodic and satisfies $\int_{0}^1 \phi(x) dx = 0$. By the mean value theorem, we have $\|\varphi\|_{C^0}\leq \|\varphi\|_{\mathrm{Lip}}$. For each $\al_1\in\R$, we choose  $\tilde{\alpha} = (\alpha_1, 0) \in \mathbb{R}^2$. Our main quantitative result for model (\ref{Fp}) is the following:

\begin{Theorem}\label{M2}
Given $\lambda \in (0,1)$, for each $\al_1\in\R$ and  Lipschitz perturbation $\phi^\lambda$ satisfying
\begin{equation}\label{contrac}
\|\phi^\lambda\|_{\mathrm{Lip}} < (1-\sqrt{\lambda})^2,
\end{equation}
the map $F^{\phi^\lambda}_{(\al_1,0)}$ admits a unique Lipschitz invariant graph.
\end{Theorem}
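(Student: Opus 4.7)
My plan is to realize the invariant graph as the unique fixed point of a graph-transform operator on a closed ball of Lipschitz functions, and then promote this to uniqueness among all Lipschitz invariant graphs via an area-contraction argument. Set $s:=\sqrt{\lambda}$, $\mu:=\|\phi^{\lambda}\|_{\mathrm{Lip}}\leq(1-s)^{2}$, and abbreviate $F:=F^{\phi^{\lambda}}_{(\alpha_{1},0)}$. For a Lipschitz $\Psi:\T\to\R$, define
\[
g_{\Psi}(x):=x+\alpha_{1}+\lambda\Psi(x)+\phi^{\lambda}(x),\qquad (T\Psi)(x):=\lambda\Psi\bigl(g_{\Psi}^{-1}(x)\bigr)+\phi^{\lambda}\bigl(g_{\Psi}^{-1}(x)\bigr),
\]
so that the graph $\Gamma_{\Psi}:=\{(x,\Psi(x)):x\in\T\}$ is $F$-invariant precisely when $g_{\Psi}$ descends to a circle homeomorphism and $T\Psi=\Psi$. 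The critical Lipschitz radius is $L:=(1-s)/s$; this is the positive double root of $\lambda t^{2}-(1-\lambda-\mu)t+\mu=0$ at the extremal value $\mu=(1-s)^{2}$, and it makes the sharp identities
\[
\lambda L+\mu\leq 1-s,\qquad 1-\lambda L-\mu\geq s
\]
hold throughout the admissible range of $\mu$.

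For existence, let $\mathcal{X}$ be the closed subset of $C^{0}(\T,\R)$ consisting of $\Psi$ with $\|\Psi\|_{\mathrm{Lip}}\leq L$ and $\|\Psi\|_{\infty}\leq\|\phi^{\lambda}\|_{\infty}/(1-\lambda)$. For $\Psi\in\mathcal{X}$, the a.e.\ slope $g_{\Psi}'\geq 1-\lambda L-\mu\geq s>0$, so $g_{\Psi}$ is bi-Lipschitz with $\|g_{\Psi}^{-1}\|_{\mathrm{Lip}}\leq 1/s$. A change of variables gives $\|T\Psi\|_{\mathrm{Lip}}\leq(\lambda L+\mu)/(1-\lambda L-\mu)\leq L$, while $\|T\Psi\|_{\infty}\leq\lambda\|\Psi\|_{\infty}+\|\phi^{\lambda}\|_{\infty}$ closes the sup-norm bound, so $T(\mathcal{X})\subseteq\mathcal{X}$. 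To check $C^{0}$-contraction, set $u_{i}:=g_{\Psi_{i}}^{-1}(x)$; rearranging $g_{\Psi_{i}}(u_{i})=x$ and estimating the mixed term $\Psi_{1}(u_{1})-\Psi_{2}(u_{2})$ by $L|u_{1}-u_{2}|+\|\Psi_{1}-\Psi_{2}\|_{\infty}$ yields $(1-\lambda L-\mu)|u_{1}-u_{2}|\leq\lambda\|\Psi_{1}-\Psi_{2}\|_{\infty}$, whence
\[
\|T\Psi_{1}-T\Psi_{2}\|_{\infty}\leq(\lambda L+\mu)|u_{1}-u_{2}|+\lambda\|\Psi_{1}-\Psi_{2}\|_{\infty}\leq\frac{\lambda}{1-\lambda L-\mu}\|\Psi_{1}-\Psi_{2}\|_{\infty}\leq\sqrt{\lambda}\,\|\Psi_{1}-\Psi_{2}\|_{\infty}.
\]
Banach's fixed-point theorem then produces the unique $\Psi^{\ast}\in\mathcal{X}$ with $T\Psi^{\ast}=\Psi^{\ast}$, whose graph is Lipschitz invariant under $F$.

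To promote this to uniqueness among \emph{all} Lipschitz invariant graphs, I would sidestep the operator $T$ entirely and use only the dissipative identity $\det DF\equiv\lambda<1$. If $\Psi_{1}\neq\Psi_{2}$ were both Lipschitz invariant, the open set $R:=\{(x,y)\in\T\times\R:\min(\Psi_{1},\Psi_{2})(x)<y<\max(\Psi_{1},\Psi_{2})(x)\}$ would have positive finite Lebesgue measure. Each half-cylinder $\{y>\Psi_{i}(x)\}$ is $F$-invariant because for $y$ sufficiently large the image has $y$-coordinate $\lambda y+\phi^{\lambda}(x)>\|\Psi_{i}\|_{\infty}$, ruling out the other component of the complement of $\Gamma_{i}$; hence $R$ is $F$-invariant, but $\mathrm{vol}(F(R))=\lambda\,\mathrm{vol}(R)<\mathrm{vol}(R)$ contradicts $F(R)=R$, forcing $\Psi_{1}=\Psi_{2}$. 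The main obstacle, and the source of the threshold $(1-\sqrt{\lambda})^{2}$, is the self-consistency $T(\mathcal{X})\subseteq\mathcal{X}$: closing the Lipschitz bound reduces to $\lambda L^{2}-(1-\lambda-\mu)L+\mu\leq 0$, whose discriminant factors as $\bigl((1-\sqrt{\lambda})^{2}-\mu\bigr)\bigl((1+\sqrt{\lambda})^{2}-\mu\bigr)$; nonnegativity of the first factor is exactly the hypothesis, at equality the two roots collapse to $L=(1-\sqrt{\lambda})/\sqrt{\lambda}$, and this algebraic rigidity is precisely what pins the $C^{0}$-contraction constant to $\sqrt{\lambda}$, leaving no slack to go beyond $(1-\sqrt{\lambda})^{2}$ within this scheme.
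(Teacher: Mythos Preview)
Your argument is correct and follows essentially the same route as the paper: both set up the graph transform on the ball of Lipschitz functions with constant at most $L=\frac{1}{\sqrt{\lambda}}-1$ (the paper's $K_2$), check that it is a self-map via the inequality $\frac{\lambda L+\mu}{1-\lambda L-\mu}\le L$, and obtain a $C^{0}$-contraction with constant $\sqrt{\lambda}=\lambda(1+L)$. Your volume-contraction step for uniqueness among \emph{all} Lipschitz invariant graphs is a genuine addition---the paper stops after the Banach fixed-point argument and simply asserts global uniqueness without explaining why an invariant Lipschitz graph with constant larger than $K_2$ is ruled out.
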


\begin{Remark}
For notational clarity, we adopt the following conventions throughout:
\begin{itemize}
\item $u \lesssim v$ (resp. $u \gtrsim v$) denotes $u \leq Cv$ (resp. $u \geq Cv$)
\item $u \sim v$ means $\frac{1}{C}v \leq u \leq Cv$
\end{itemize}
for some positive constant $C$.
It is easy to see that $(1-\sqrt{\lambda})^2\sim (1-\lambda)^2$ as $\lambda\to 1^-$. For specific perturbations like the dissipative standard map
\begin{equation}\label{contrac33}
\varphi(x) = \frac{\kappa}{2\pi} \sin(2\pi x),
\end{equation}
Bohr \cite{Bo} showed that invariant graphs are destroyed when $\kappa >  \frac{2(1+\lambda)}{2+\lambda}$.  Fixing $\alpha = (\alpha_1, 0) \in \mathbb{R}^2$, we obtain the existence  of invariant graphs when $\kappa \leq (1-\sqrt{\lambda})^2$. It follows that Theorem \ref{M2} is optimal in the sense:
\[\Delta(\lambda):=\frac{2(1+\lambda)}{2+\lambda}-(1-\sqrt{\lambda})^2\to 0^+,\quad \text{as} \quad \lambda\to 0^+.\]

For comparison, in the conservative case, current theoretical results (to the best of our knowledge) show:
\begin{itemize}
\item Complete breakdown occurs when $\kappa > \frac{4}{3}$ \cite{M1};
\item Existence persists for rotation number $\frac{1+\sqrt{5}}{2}$ when $\kappa \leq 0.029$ \cite[Page 197]{H33}.
\end{itemize}
\end{Remark}

If the perturbation $\phi^\lambda$ is a $C^1$ function, then under condition (\ref{contrac}) of Theorem \ref{M2},  the regularity of the invariant graph can be improved to $C^1$. This regularity enhancement is based on both the contractive property (hyperbolicity in more general cases) of the mapping $F^{\phi^\lambda}_{(\al_1,0)}$ and the invariance of the graph with respect to $F^{\phi^\lambda}_{(\al_1,0)}$.

\begin{Theorem}\label{M22}
Given $\lambda \in (0,1)$, for each $C^1$ perturbation $\phi^\lambda$ satisfying
\[
\|\phi^\lambda\|_{C^1} < (1-\sqrt{\lambda})^2,
\]
the map $F^{\phi^\lambda}_{(\al_1,0)}$ admits a unique $C^1$ invariant graph.
\end{Theorem}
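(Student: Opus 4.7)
The plan is to upgrade the Lipschitz invariant graph $\Psi$ furnished by Theorem~\ref{M2} to a $C^1$ graph by constructing a continuous candidate for $\Psi'$ via a \emph{tangent graph transform} and then identifying it with the almost-everywhere derivative of $\Psi$. Let $g(x) = x + \al_1 + \la\Psi(x) + \phi^\la(x)$ denote the induced circle homeomorphism on $\T$. Formal differentiation of the invariance equation $\Psi \circ g = \la\Psi + \phi^\la$ suggests that any derivative $u = \Psi'$ must satisfy
\[u(g(x)) = \frac{\la\, u(x) + (\phi^\la)'(x)}{1 + \la\, u(x) + (\phi^\la)'(x)}.\]
Accordingly, on the ball $\mathcal{B}_L := \{u \in C^0(\T) : \|u\|_\infty \leq L\}$ with $L := \tfrac{1}{\sqrt{\la}} - 1$, introduce the operator
\[(\cT u)(y) = \frac{\la\, u(g^{-1}(y)) + (\phi^\la)'(g^{-1}(y))}{1 + \la\, u(g^{-1}(y)) + (\phi^\la)'(g^{-1}(y))},\]
for which the expected $u = \Psi'$ is a fixed point.

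Checking that $\cT$ preserves $\mathcal{B}_L$ reduces to the quadratic inequality $\la L^2 + (\|(\phi^\la)'\|_\infty + \la - 1)L + \|(\phi^\la)'\|_\infty \leq 0$, which under $\|\phi^\la\|_{C^1} \leq (1-\sqrt{\la})^2$ is saturated exactly at the double root $L = \tfrac{1}{\sqrt{\la}} - 1$; this is the same inequality that controls $\|\Psi\|_{\Lip}$ in the proof of Theorem~\ref{M2}. The core step is strict contraction. A direct calculation yields
\[|(\cT u_1)(y) - (\cT u_2)(y)| = \frac{\la\,|(u_1 - u_2)(g^{-1}(y))|}{\bigl[(1 + \la u_1 + (\phi^\la)')(1 + \la u_2 + (\phi^\la)')\bigr](g^{-1}(y))},\]
so the pointwise contraction factor is at most $\la/(1 - \la L - \|(\phi^\la)'\|_\infty)^2$, which equals $1$ at the critical boundary. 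To turn this into strict contraction the plan is to iterate: the denominator factors behave essentially like the Jacobian of $g$, and since $\int_\T g'\, dx = 1$ (a consequence of $\int (\phi^\la)' = \int \Psi' = 0$), $g'$ cannot remain at its extremal value along a whole orbit. A Jensen/multiplicative-ergodic-type average over $N$ iterates then produces a strict contraction factor $\kappa < 1$ for $\cT^N$, and the Banach fixed point theorem delivers a unique continuous $u \in \mathcal{B}_L$.

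It then remains to identify $u$ with $\Psi'$. Since $\Psi$ is Lipschitz it is differentiable almost everywhere, and since $g$ is bi-Lipschitz the set of non-differentiability points is negligible both forwards and backwards, so at a.e.\ $x$ the secant form of the invariance equation passes to the limit and shows that $\Psi'$ satisfies the same functional equation. Extending the contraction argument to $L^\infty$ forces $\Psi' = u$ almost everywhere, and the absolute continuity of $\Psi$ together with continuity of $u$ promotes this to $\Psi \in C^1$ with $\Psi' = u$ via $\Psi(x) - \Psi(0) = \int_0^x u(s)\, ds$. Uniqueness of the $C^1$ invariant graph is inherited from Theorem~\ref{M2}. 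The principal obstacle throughout is the strict contraction step in the critical case $\|\phi^\la\|_{C^1} = (1-\sqrt{\la})^2$: the pointwise contraction factor equals $1$ at worst-case points, and only the orbit-averaged argument exploiting the normalization $\int_\T g' = 1$ produces genuine contraction after finitely many iterates.
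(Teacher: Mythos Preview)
Your route via a tangent graph transform is a genuinely different strategy from the paper's. The paper argues through cone fields: with $\beta = 1/\sqrt{\la}$ it sets up $\mathcal{C}_\beta = \{|v_2|\le\beta|v_1|\}$, checks that the tangent cone of the Lipschitz graph from Theorem~\ref{M2} sits in $\mathcal{C}_\beta$ and that $DF$ maps $\mathcal{C}_\beta$ into its interior, and then obtains the tangent line at each point as the nested intersection $L(x)=\bigcap_{k\ge0} DF^k\!\cdot\!\mathcal{C}_\beta(g^{-k}(x))$; continuity of $x\mapsto L(x)$ is deduced by a Grassmannian compactness/uniqueness-of-accumulation-point argument, and $C^1$ regularity follows from a geometric criterion (tangent cone contained in a continuous line field). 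Your approach is the analytic counterpart --- building $\Psi'$ directly as a fixed point of $\cT$ in $C^0$ --- and away from the critical case it delivers the result just as cleanly. What the cone method buys is that differentiability comes from an \emph{intersection of nested cones along each orbit} rather than from a metric contraction on a function space.

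The genuine gap is your rescue of strict contraction at the borderline $\|(\phi^\la)'\|_\infty = (1-\sqrt{\la})^2$. You propose an orbit-averaging argument for $\cT^N$ based on $\int_\T g' = 1$, but the denominators you must control are $\bigl(1+\la u_i + (\phi^\la)'\bigr)$ with $u_1,u_2$ \emph{arbitrary} elements of $\mathcal{B}_L$, not $g' = 1+\la\Psi'+(\phi^\la)'$. No integral normalization constrains these quantities: wherever $(\phi^\la)'$ attains its minimum $-(1-\sqrt{\la})^2$ one may take $u_i=-L$ there, the factor drops to $\sqrt{\la}$, and the local contraction ratio is exactly~$1$. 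Worse, a direct computation shows that if $u(x_0)=-L$ and $(\phi^\la)'(x_0)=-(1-\sqrt{\la})^2$ then $(\cT u)(g(x_0))=-L$ again, so iteration does not help; if such an $x_0$ is periodic for $g$ the ratio stays $1$ for every $\cT^N$. The Jensen/ergodic heuristic applies to $g'$, not to the operator you actually need to contract, so the borderline case is not closed by the mechanism you describe.
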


In view of (\ref{hype}), the parameter $\lambda$ quantifies the degree of normal hyperbolicity: as $\lambda \to 1^-$, the normal hyperbolicity weakens, while as $\lambda \to 0^+$, it strengthens.
Theorem \ref{M22} presents a quantitative version of the NHIM theorem, establishing the relationship between the existence of invariant graphs and the ratio of decay rates as both normal hyperbolicity and the $C^1$-norm of the perturbation  decrease simultaneously.

 \vspace{1em}

Inspired by \cite{Ma} and employing normal form techniques, we perform a sequence of coordinate changes to obtain a more refined result.

\begin{Theorem}\label{M222}
Let $\gamma \in (1,2)$ and let $\alpha_1$ be a Diophantine number. Then there exists $\lambda_0 \in (0,1)$ such that for all $\lambda \in [\lambda_0, 1)$ and for each real-analytic perturbation $\phi^\lambda \in C^\omega(\mathbb{T})$ satisfying
\[
\|\phi^\lambda\|_{C^\omega} < (1 - \lambda)^{\gamma},
\]
the map $F^{\phi^\lambda}_{(\alpha_1, 0)}$ admits a unique $C^1$ invariant graph, where $\|\cdot\|_{C^\omega}$ is defined by~\eqref{renorm}.
\end{Theorem}

Define for $r\in [1,+\infty]\cup\{\omega\}$,
\[
\mathcal{S}^r_\lambda := \left\{ \phi^\lambda \in C^r(\mathbb{T})\ \middle|\ \exists\, \tilde{\alpha} \in \mathbb{R}^2 \text{ such that } F^{\phi^\lambda}_{\tilde{\alpha}} \text{ admits a } C^1 \text{ invariant graph} \right\},
\]
\[
\Lambda^r:= \left\{ \gamma \in \mathbb{R}\ \middle|\ \phi^\lambda \in \mathcal{S}^r_\lambda,\ \|\phi^\lambda\|_{C^r} = O((1 - \lambda)^{\gamma}) \text{ as } \lambda \to 1^- \right\}.
\]

Combining Proposition~\ref{Mt} and Theorem~\ref{M222}, we conclude that
\[
\inf \Lambda^\omega = 1,
\]
and the infimum is not attained. This provides a complete answer to Question 3 in the real-analytic category. However, based on Proposition~\ref{Mt} and Theorem~\ref{M22}, we only know that in the $C^1$ category,
\[
1 \leq \inf  \Lambda^1 \leq 2.
\]

\subsection{The regularity of invariant graphs}

Based on Theorem \ref{M2} and Theorem \ref{M22}, we are naturally led to the following question:

\vspace{1ex}

\begin{itemize}
\item \textbf{Question 4:} {\it When the perturbation is only Lipschitz continuous (with non-differentiable points) but satisfies (\ref{contrac}), what regularity does the invariant graph possess?}
\end{itemize}

\vspace{1ex}


We construct an example to demonstrate the existence of a Lipschitz perturbation satisfying condition (\ref{contrac}) for which the preserved Lipschitz invariant graph contains non-differentiable points. It shows that Theorems \ref{M2} and \ref{M22} are mutually non-implicative.
\begin{Theorem}\label{M3}
Given $\lambda \in (0,1)$, $0 < \varepsilon \ll 1$, and an irrational number $\omega$, there exists a sequence of Lipschitz functions $\{\phi_n^{\lambda}\}_{n \in \mathbb{N}}$ and a sequence $\{\bar{\alpha}_n\}_{n \in \mathbb{N}} \subset \mathbb{R}$ such that the map $F^{\phi_n^\lambda}_{(\bar{\alpha}_n,0)}$ admits a Lipschitz invariant graph $\tilde{\Gamma}_n$ that contains non-differentiable points. Moreover:
\begin{itemize}
    \item[(I)] $\|\phi^{\lambda}_n\|_{C^{1-\varepsilon}} = O\left(\frac{1}{n^\varepsilon}\right)$ as $n \to \infty$;
    \item[(II)] For all $n \in \mathbb{N}$, $\|\phi^{\lambda}_n\|_{\mathrm{Lip}} \leq (1 - \sqrt{\lambda})^2$;
    \item[(III)] The circle map induced by the restriction of $F^{\phi_n^\lambda}_{(\bar{\alpha}_n,0)}$ to $\tilde{\Gamma}_n$ has rotation number $\omega$.
\end{itemize}

\end{Theorem}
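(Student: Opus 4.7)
The plan is to combine Theorem~\ref{M2} with a sequence of perturbations obtained by grafting a small Lipschitz corner onto a rescaling of the trigonometric polynomials of Proposition~\ref{Mt}. The central analytical tool is the telescoping representation of the graph function: if $\Psi$ denotes the Lipschitz invariant graph provided by Theorem~\ref{M2} and $g(x):=x+\alpha_1+\lambda\Psi(x)+\phi(x)$ is the induced circle homeomorphism, then $\Psi\circ g=\lambda\Psi+\phi$, and iterating this identity together with $\lambda\in(0,1)$ and the boundedness of $\Psi$ yields the absolutely convergent series
\[
\Psi(x)=\sum_{k=0}^{\infty}\lambda^{k}\,\phi\bigl(g^{-(k+1)}(x)\bigr).
\]
This identity transfers any isolated Lipschitz corner of $\phi$ at a base point $x_0$ into Lipschitz corners of $\Psi$ at every point of the forward $g$-orbit $\{g^{j}(x_0)\}_{j\geq 1}$.

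First I would set $\phi_n^\lambda:=\varphi_n^\lambda+\eta_n$, where $\varphi_n^\lambda$ is a suitable rescaling of the smooth trigonometric polynomial from Proposition~\ref{Mt} and $\eta_n$ is a mean-zero piecewise linear tent of width $\sim 1/n$ and height $O(1/n)$ centered at a chosen $x_0$. The rescalings are fixed so that $\|\varphi_n^\lambda\|_{\mathrm{Lip}}+\|\eta_n\|_{\mathrm{Lip}}\leq(1-\sqrt{\lambda})^2$, which yields item~(II) and activates Theorem~\ref{M2} for every parameter value $\alpha_1$. Item~(I) follows from Proposition~\ref{Mt}(a) combined with standard Hölder interpolation applied to the tent, giving $\|\eta_n\|_{C^{1-\varepsilon}}=O(n^{-\varepsilon})$.

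To secure item~(III) I would mimic the parameter-adjustment argument of Theorem~\ref{M1}(3) in the Lipschitz setting. The contraction-mapping proof underlying Theorem~\ref{M2} yields continuous dependence of $\Psi_{\alpha_1}$ on $\alpha_1$, hence continuous dependence of the induced circle map $g_{\alpha_1}$, so $\alpha_1\mapsto\rho(g_{\alpha_1})$ is continuous, monotone non-decreasing and satisfies $\rho(g_{\alpha_1+1})=\rho(g_{\alpha_1})+1$. An intermediate-value argument then produces $\bar\alpha_n$ with $\rho(g_{\bar\alpha_n})=\omega$; irrationality of $\omega$ ensures that the forward $g_{\bar\alpha_n}$-orbit of $x_0$ is injective and confined to a measure-zero minimal set, as claimed in the introduction.

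The principal obstacle is verifying that the corner of $\phi_n^\lambda$ at $x_0$ is genuinely inherited by $\Psi$ rather than being cancelled by the remaining summands of the series. Fixing $x_*=g_{\bar\alpha_n}^{m}(x_0)$ with $m\geq 1$, only the summand $k=m-1$ evaluates $\phi_n^\lambda$ at the singular point $x_0$; every other summand composes $\phi_n^\lambda$ with $g^{-(k+1)}$ at a point $g^{m-k-1}(x_0)\neq x_0$ where $\phi_n^\lambda$ is smooth. I would then compute the one-sided Dini derivatives of $\Psi$ at $x_*$ via the chain rule applied to the Lipschitz map $g^{-m}$, whose one-sided derivatives at $x_*$ exist and are positive by monotonicity of $g$ and term-by-term differentiation of $\Psi\circ g=\lambda\Psi+\phi_n^\lambda$. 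This forces the left and right derivatives of $\Psi$ at $x_*$ to differ by a nonzero multiple of $\lambda^{m-1}[\phi_n^\lambda{}'(x_0^+)-\phi_n^\lambda{}'(x_0^-)]$, producing the required non-differentiable point on $\tilde\Gamma_n$.
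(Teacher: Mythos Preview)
Your construction for Items~(I)--(III) is sound, but the non-differentiability argument contains a genuine circularity. You invoke the series $\Psi(x)=\sum_{k\ge 0}\lambda^{k}\phi\bigl(g^{-(k+1)}(x)\bigr)$ and claim that at $x_*=g^{m}(x_0)$ only the summand $k=m-1$ sees the corner of $\phi$. This overlooks that $g=\mathrm{Id}+\alpha_1+\lambda\Psi+\phi$ is built from $\Psi$ itself: any corner of $\Psi$ propagates to $g$ and hence to every $g^{-j}$. Thus for $k\neq m-1$ the term $\phi\circ g^{-(k+1)}$ may well be non-differentiable at $x_*$ through the inner composition, even though $\phi$ is smooth at the evaluation point $g^{m-k-1}(x_0)$. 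Your assertion that the one-sided derivatives of $g^{-m}$ exist at $x_*$ presupposes exactly the regularity of $\Psi$ along the orbit that is in question; monotonicity of $g$ does not furnish one-sided derivatives.

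The gap is easily repaired by a direct contradiction at the level of the functional equation. Suppose $\Psi$ were differentiable at both $x_0$ and $g(x_0)$. Then $g$ has one-sided derivatives $g'_{L,R}(x_0)=1+\lambda\Psi'(x_0)+\phi'_{L,R}(x_0)>0$ under your Lipschitz bound, and taking one-sided derivatives in $\Psi\circ g=\lambda\Psi+\phi$ at $x_0$ gives $\Psi'(g(x_0))\,g'_{L,R}(x_0)=\lambda\Psi'(x_0)+\phi'_{L,R}(x_0)$. Subtracting forces $\Psi'(g(x_0))=1$, and substituting back yields $1=0$. Hence $\Psi$ fails to be differentiable at $x_0$ or at $g(x_0)$, which is all the theorem asserts. (Incidentally, the rescaled polynomials from Proposition~\ref{Mt} are superfluous: the mean-zero tent $\eta_n$ alone already delivers (I) and (II).)

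For comparison, the paper proceeds in the opposite direction. It first prescribes the restricted dynamics as an Arnaud-type modification $h_N$ of a $C^1$ Denjoy counterexample with rotation number $\omega$, breaking differentiability of $h_N$ along a wandering orbit $\{x_k\}$, and then \emph{defines} the perturbation via Proposition~\ref{hf1} as $\psi_N:=h_N+\lambda h_N^{-1}-(1+\lambda)\mathrm{Id}$. The right-hand derivatives $\beta_k$ of $h_N$ are chosen through a M\"obius recursion $\beta_{k+1}=m_{k+1}-\lambda/\beta_k$ precisely so that the corners of $h_N$ and $h_N^{-1}$ cancel in $\psi_N$, leaving $\psi_N$ differentiable along the orbit while the graph $\Psi_N$ inherits the corners of $h_N$. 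This yields explicit control over the non-differentiable set (it sits on a wandering orbit inside a measure-zero Cantor set) and makes (II) a direct computation, at the cost of a substantially longer construction. Your route is quicker and rests only on Theorem~\ref{M2} plus an intermediate-value argument for the rotation number, but gives no information on \emph{where} the corners of $\Psi$ lie.
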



\begin{Remark}
According to \cite[Page 52, Remark 2]{HPS}, for a $C^1$ normally hyperbolic map $F$, a $C^1$ invariant graph of $F^\phi$ persists under a Lipschitz-small perturbation $\phi$. However, Theorem \ref{M3} demonstrates that this persistence fails when both the perturbation size and the normal hyperbolicity decay simultaneously.
\end{Remark}

\begin{Remark}
While dissipative twist maps can, in general, admit invariant curves that are not graphs (see Proposition 15.3 in~\cite{L2}), the perturbation constructed in~\cite{L2} is not small in the $C^0$ topology. Thus, the problem of finding critical perturbations that break all invariant curves of the two-parameter family $\{F_{\al}\}_{\alpha\in\mathbb{R}^2}$ (regardless of whether they are graphs) remains open.
\end{Remark}

\subsection*{Organization of the note}
The note is organized as follows. In Section \ref{S3}, we investigate the diversity of dynamics on invariant graphs. By adjusting either of two parameters, we show that invariant graphs can exhibit arbitrary frequencies, thereby proving Theorem \ref{M1} and \ref{Mt3}.
In Section \ref{S4}, we examine two-parameter families of dissipative twist maps, focusing on the critical perturbation size that leads to the breakdown of all invariant graphs. According to the NHIM theorem, for any given $\lambda \in (0,1)$, the perturbation required to destroy invariant graphs cannot be arbitrarily small in the $C^1$-norm (or more generally, in the Lipschitz semi-norm). We prove three quantitative versions of the NHIM theorem in our setting ( see Theorem \ref{M2}, \ref{M22} and \ref{M222}). As $\lambda \to 1^-$, both the perturbation size and normal hyperbolicity decay simultaneously. The {\it almost sharpness} discussed in this section is determined by the ratio of their decay rates.
In Section \ref{S5}, we construct an example (Theorem \ref{M3})  demonstrating that Theorems \ref{M2} and \ref{M22} are mutually non-implicative. This result complements the classical NHIM theorem by showing that for $C^1$ systems under sufficiently small Lipschitz perturbations, invariant graphs need not remain $C^1$ when both the perturbation size and normal hyperbolicity diminish simultaneously.

	 \vspace{2em}

 \noindent\textbf{Acknowledgement.}
The authors would like to thank Jessica Massetti for her patient and helpful explanations regarding \cite{Ma}. This work was partially supported by the National Natural Science Foundation of China (Grant No.~12122109).

	 \vspace{2em}

%

\section{\sc The dynamics on invariant graphs}\label{S3}

\subsection{Persistence of invariant graphs}
This subsection is devoted to the proof of Item~(1) in Theorem~\ref{M1}. We begin by recalling some foundational concepts and establishing key notations.

Following Herman, we denote by $\mathrm{Diff}^r_+(\mathbb{R})$ (resp. $\mathrm{Diff}^r_+(\mathbb{T})$) the group of orientation-preserving $C^r$ diffeomorphisms on $\mathbb{R}$ (resp. $\mathbb{T}$), where $r \in [0,+\infty) \cup \{+\infty\} \cup \{\omega\}$. The universal covering space of $\mathrm{Diff}^r_+(\mathbb{T})$ can be represented as
\[
D^r(\mathbb{T}) := \{f \in \mathrm{Diff}^r_+(\mathbb{R}) \mid f - \mathrm{Id} \in C^r(\mathbb{T})\}.
\]
For any $f \in D^r(\mathbb{T})$, the rotation number $\rho(f)$ is well-defined. By \cite[Proposition 2.1]{SoW}, we have

\begin{Proposition}\label{hf1}
The map $F^\phi_{\al}$ admits a $C^0$-invariant graph $\tilde{\Gamma} := \{(x, \tilde{\Psi}_{\alpha}(x)) \mid x \in \mathbb{R}\}$ if and only if there exists $g_{\alpha} \in D^r(\mathbb{T})$ satisfying the functional equation
\begin{equation}\tag{A}\label{eq:main}
\frac{1}{1+\lambda}g_{\alpha}(x) + \frac{\lambda}{1+\lambda}g_{\alpha}^{-1}(x) = x + \frac{1}{1+\lambda}\left((1-\lambda)\alpha_1 + \lambda\alpha_2 + \phi(x)\right) \quad \forall x \in \mathbb{R}.
\end{equation}
\end{Proposition}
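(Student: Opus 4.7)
The plan is a direct algebraic derivation. The key observation is that the restriction of $F^\phi_\al$ to any putative invariant graph $\tilde{\Gamma} = \{(x,\tilde{\Psi}_\al(x))\}$ is encoded by a single circle map $g_\al$, namely the $X$-coordinate of $F^\phi_\al$ evaluated along $\tilde{\Gamma}$, and the graph-invariance condition splits into two scalar identities whose combination is precisely the functional equation (A).

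For the forward direction, assume $\tilde{\Gamma}$ is invariant and set $g_\al(x) := x + \al_1 + \la\tilde{\Psi}_\al(x) + \phi(x)$. Invariance means $F^\phi_\al(x,\tilde{\Psi}_\al(x)) = (g_\al(x), \tilde{\Psi}_\al(g_\al(x)))$, which unpacks to the two scalar identities
\[
\la\,\tilde{\Psi}_\al(x) = g_\al(x) - x - \al_1 - \phi(x), \qquad \tilde{\Psi}_\al(g_\al(x)) = \al_2 + \la\,\tilde{\Psi}_\al(x) + \phi(x).
\]
Substituting the first identity into the second eliminates $\phi$ and yields $\tilde{\Psi}_\al(g_\al(x)) - g_\al(x) = \al_2 - \al_1 - x$. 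Applying $g_\al^{-1}$ produces the closed expression $\tilde{\Psi}_\al(x) = x - g_\al^{-1}(x) + (\al_2 - \al_1)$. Plugging this back into the first identity and dividing through by $1+\la$ produces (A).

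For the converse, assume $g_\al \in D^r(\T)$ satisfies (A) and define $\tilde{\Psi}_\al(x) := x - g_\al^{-1}(x) + (\al_2 - \al_1)$. Since $g_\al - \Id$ and hence $g_\al^{-1} - \Id$ is $1$-periodic, $\tilde{\Psi}_\al$ descends to a continuous function on $\T$. Clearing denominators in (A) and rearranging gives the first invariance identity $g_\al(x) = x + \al_1 + \la\,\tilde{\Psi}_\al(x) + \phi(x)$. Evaluating the defining formula of $\tilde{\Psi}_\al$ at $g_\al(x)$ and substituting this first identity then yields $\tilde{\Psi}_\al(g_\al(x)) = g_\al(x) - x + (\al_2 - \al_1) = \al_2 + \la\,\tilde{\Psi}_\al(x) + \phi(x)$, which is the second invariance identity; together these show $F^\phi_\al(x,\tilde{\Psi}_\al(x)) \in \tilde{\Gamma}$.

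The argument is essentially a bookkeeping exercise, and I expect no real analytic obstruction. The only mild verifications concern confirming in the forward direction that $g_\al$ genuinely lies in $D^r(\T)$: periodicity of $g_\al - \Id$ follows from the $1$-periodicity of $\tilde{\Psi}_\al$ and $\phi$; orientation preservation follows from the fact that $F^\phi_\al|_{\tilde{\Gamma}}$ is a homeomorphism isotopic to the identity; and the $C^r$ regularity of $g_\al$ follows from that of $\phi$ by standard bootstrapping in (A) using $\la<1$. The real content of the proposition is that the two scalar invariance identities collapse, after elimination of $\phi$ and $\tilde{\Psi}_\al$, into a single fixed-point equation for $g_\al$ alone.
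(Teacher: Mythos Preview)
Your algebraic derivation is correct. The paper itself does not prove this proposition but simply cites \cite[Proposition~2.1]{SoW}; the two scalar identities you extract from graph-invariance are precisely the relations \eqref{finvar}--\eqref{psss} that the paper records immediately after the statement, so your argument is the natural one and presumably coincides with that of \cite{SoW}.

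One small caution concerns your closing remark that ``the $C^r$ regularity of $g_\al$ follows from that of $\phi$ by standard bootstrapping in (A) using $\la<1$.'' Equation~\eqref{eq:main} mixes $g_\al$ and $g_\al^{-1}$ at the \emph{same} point $x$, and there is no evident contraction in it that upgrades the regularity of $g_\al$ from Lipschitz to $C^r$. In the forward direction the regularity of $g_\al$ is inherited directly from that of $\tilde\Psi_\al$ through the formula $g_\al(x)=x+\al_1+\la\tilde\Psi_\al(x)+\phi(x)$, not from \eqref{eq:main}. The proposition as stated is in any case somewhat loose about the index $r$ (a $C^0$-invariant graph on one side, $g_\al\in D^r(\T)$ on the other); it should be read with $r$ matching the regularity of the graph, so this point does not affect your main argument.
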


The invariance of $\tilde{\Gamma}$ implies the  relation
\begin{equation}\label{finvar}
F^\phi_{\al}(x, \tilde{\Psi}_{\alpha}(x)) = (g_{\alpha}(x), \tilde{\Psi}_{\alpha}(g_{\alpha}(x))),
\end{equation}
from which we immediately deduce that
\begin{equation}\label{xw2}
g_{\alpha}(x) = x + \alpha_1 + \lambda\tilde{\Psi}_{\alpha}(x)+\phi(x),
\end{equation}
\begin{equation}\label{psss}
\phi(x)=\tilde{\Psi}_\al(g(x))-\lambda\tilde{\Psi}_\al(x)-\al_2.
\end{equation}
Let $\pi_1: (x,y) \mapsto x$ and $\pi_2: (x,y) \mapsto y$ denote the canonical projections. For fixed $\lambda \in (0,1)$, we simplify notation by writing $F^{\phi}_\alpha(x,y) := F^{\phi}_{\al}(x,y)$. The following persistence result follows from the NHIM (Normally Hyperbolic Invariant Manifold) theorem:

\begin{Proposition}\label{nhim}
For any $r \in [1,+\infty)$ and $0 < \eps \ll 1$, there exists $\delta = O(\eps)$ such that if $\|\phi\|_{C^r} \leq \delta$, then the perturbed map $F^{\phi}_\alpha$ admits a $C^r$-invariant graph $\tilde{\Gamma}_\alpha^\phi := \{(x, \tilde{\Psi}_{\alpha}(x)) \mid x \in \mathbb{R}\}$ with the estimate
\[
\left\|\tilde{\Psi}_\alpha - \frac{\alpha_2}{1-\lambda}\right\|_{C^r} \leq \eps.
\]
\end{Proposition}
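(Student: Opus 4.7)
The plan is to derive Proposition~\ref{nhim} as a quantitative application of the NHIM theorem to the explicit integrable model $F_\alpha$, implemented as a graph-transform contraction so that the linear dependence $\delta = O(\eps)$ is transparent. First, I verify normal hyperbolicity of the unperturbed graph $\Gamma_0 := \R \times \{\alpha_2/(1-\lambda)\}$: the tangential dynamics of $F_\alpha$ on $\Gamma_0$ is a rigid translation by $\tilde{\alpha}_1 := \alpha_1 + \lambda\alpha_2/(1-\lambda)$, hence an isometry, while the normal linearization is scalar multiplication by $\lambda \in (0,1)$. The $r$-normal hyperbolicity condition reduces to $\lambda < 1^r = 1$ and is therefore trivially satisfied for every $r \geq 1$.

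Next, I set up the graph transform. Writing $\tilde{\Psi}(x) = \alpha_2/(1-\lambda) + u(x)$ with $u : \T \to \R$ of class $C^r$, the invariance relation \eqref{psss} is equivalent to
\[u(g_u(x)) = \lambda u(x) + \phi(x), \qquad g_u(x) := x + \tilde{\alpha}_1 + \lambda u(x) + \phi(x).\]
For $\|u\|_{C^1}$ and $\|\phi\|_{C^1}$ small enough, $g_u \in D^r(\T)$, so its inverse is well defined, and the graph transform reads $\mathcal{T}(u) := (\lambda u + \phi) \circ g_u^{-1}$. On a suitable $C^1$-ball $B_\eps$ the trivial estimate $\|\mathcal{T}(u)\|_{C^0} \leq \lambda\|u\|_{C^0} + \|\phi\|_{C^0}$ shows $\mathcal{T}$ preserves the $C^0$-constraint as soon as $\delta \leq (1-\lambda)\eps$. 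A standard mean-value computation, controlling $g_{u_1}^{-1} - g_{u_2}^{-1}$ by $\lambda \|u_1 - u_2\|_{C^0}$ up to a factor close to $1$, upgrades this to a genuine $C^0$-contraction on $B_\eps$ with Lipschitz constant close to $\lambda$, yielding a unique fixed point $u^\ast$, which is the required $\tilde{\Psi}_\alpha - \alpha_2/(1-\lambda)$.

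The main technical obstacle is upgrading this $C^0$-persistence to the full $C^r$-estimate with the stated linear dependence $\delta = O(\eps)$. Differentiating $\mathcal{T}(u)$ via the Fa\`{a} di Bruno formula introduces derivatives of $g_u^{-1}$ up to order $r$, which depend nonlinearly on the derivatives of $u$ and $\phi$, and the contraction constant in the top-order norm picks up subcritical correction terms. The standard remedy is the fiber-contraction (Henry--Lanford) scheme: show that $\mathcal{T}$ preserves a $C^r$-ball of radius $\eps$ and contracts in the weaker $C^{r-1}$ norm, so that the $C^0$-fixed point lifts to a $C^r$-fixed point with the same radius. Equivalently, since the normal rate $\lambda$ is a genuine constant and the tangential map is isometric, the $C^r$-section theorem of \cite[Chapter~3]{HPS} applies verbatim and yields both the $C^r$-invariant graph $\tilde{\Gamma}^\phi_\alpha$ and the quantitative bound $\|u^\ast\|_{C^r} \lesssim \|\phi\|_{C^r}$; choosing $\delta$ of order $\eps$ (with implicit constant depending on $\lambda$ and $r$) then gives the stated estimate.
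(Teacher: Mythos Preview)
The paper does not actually prove this proposition: it simply states it as a direct consequence of the NHIM theorem, citing \cite{HPS}, and moves on. Your proposal is a correct and reasonably detailed unpacking of precisely that citation --- verifying $r$-normal hyperbolicity of $\Gamma_0$ for the integrable map (tangential isometry, normal contraction by $\lambda$), setting up the graph transform via the invariance relation \eqref{psss}, and invoking the $C^r$-section/fiber-contraction machinery of \cite[Chapter~3]{HPS} to get the $C^r$ graph with the linear bound $\|u^\ast\|_{C^r}\lesssim\|\phi\|_{C^r}$. So you are not taking a different route; you are supplying the details the paper deliberately leaves to the reference.
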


\subsection{Lipschitz dependence of invariant graphs on parameters}
We now turn to the verification of Item (2) in Theorem \ref{M1}.  We will use the following notion from \cite{Co}.
\begin{Definition}
Let $X$ be a metric space, $\mathcal{C}$ a compact subset of $X$ and $f:X\to X$ a continuous map, then we call $\mathcal{C}$ an attractor block for $f$ if
\[f(\mathcal{C})\subseteq \mathrm{interior}\ \mathcal{C}.\]
The set
\[\cap_{i=1}^\infty f^i(\mathcal{C})\subseteq \mathcal{C},\]
which is the largest invariant set in $\mathcal{C}$, is called the attractor for the attractor block.
\end{Definition}

Without ambiguity, We  use $\Psi_\al$ to denote $\tilde{\Psi}_\al$ for simplicity.
\begin{Proposition}\label{lipo}
 If $\|\phi\|_{C^1}\leq \delta_0$ where $\delta$ is the constant determined by Proposition \ref{nhim}, then the function $\tilde{\Psi}_\alpha(x)$ is uniformly Lipschitz with respect to the parameter $\alpha:=(\alpha_1,\alpha_2)\in \R^2$. More precisely, for each $x\in \R$ and $\al,{\al'}\in \R$,
\[|\tilde{\Psi}_\al(x)-\tilde{\Psi}_{\al'}(x)|\leq \frac{1}{1-\lambda}|\al_2-\al'_2|.\]
\end{Proposition}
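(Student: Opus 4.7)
The plan is to derive an explicit series representation for $\tilde{\Psi}_\al$ by iterating the invariance relation \eqref{psss} backward along the graph, and then to estimate the difference of two such representations. I would first substitute $x=g_\al^{-1}(u)$ into \eqref{psss} to obtain
\[
\tilde{\Psi}_\al(u) \;=\; \al_2 + \lambda\tilde{\Psi}_\al\bigl(g_\al^{-1}(u)\bigr) + \phi\bigl(g_\al^{-1}(u)\bigr).
\]
Iterating this $n$ times produces a telescoping sum whose boundary term $\lambda^n\tilde{\Psi}_\al(g_\al^{-n}(u))$ vanishes as $n\to\infty$ thanks to the uniform boundedness of $\tilde{\Psi}_\al$ provided by Proposition \ref{nhim}. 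In the limit this yields the explicit series
\[
\tilde{\Psi}_\al(u) \;=\; \frac{\al_2}{1-\lambda} \;+\; \sum_{k=1}^{\infty}\lambda^{k-1}\phi\bigl(g_\al^{-k}(u)\bigr).
\]

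Applying this formula for both $\al$ and $\al'$ (with the first coordinate $\al_1=\al'_1$ kept fixed, so that only $\al_2$ varies) and subtracting gives
\[
\tilde{\Psi}_\al(u)-\tilde{\Psi}_{\al'}(u) \;=\; \frac{\al_2-\al'_2}{1-\lambda} \;+\; \sum_{k=1}^{\infty}\lambda^{k-1}\bigl[\phi(g_\al^{-k}(u))-\phi(g_{\al'}^{-k}(u))\bigr].
\]
The leading term already supplies the announced contribution, so the task reduces to absorbing the residual sum into it. Each residual term is controlled via $|\phi(a)-\phi(b)|\le\|\phi\|_{C^1}|a-b|$, and by the identity $g_\al-g_{\al'}=\lambda(\tilde{\Psi}_\al-\tilde{\Psi}_{\al'})$, which follows from \eqref{xw2} using $\al_1=\al'_1$, one obtains $\|g_\al-g_{\al'}\|_\infty\le\lambda D$ where $D:=\|\tilde{\Psi}_\al-\tilde{\Psi}_{\al'}\|_\infty$.

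A recursive application of the mean value theorem to $g_{\al'}^{-1}$, whose derivative is close to $1$ by the NHIM hypothesis ($\|\tilde{\Psi}_{\al'}\|_{\Lip}\le\eps$ and $\|\phi\|_{C^1}\le\delta_0$), then yields $\Delta_k := |g_\al^{-k}(u)-g_{\al'}^{-k}(u)| \le \mu^{-1}(\Delta_{k-1}+\lambda D)$ with $\mu := 1-\lambda\|\tilde{\Psi}_{\al'}\|_{\Lip}-\|\phi\|_{C^1}$. Summing the resulting geometric bound against the weights $\lambda^{k-1}$, which converges since $\lambda/\mu<1$ for $\delta_0,\eps$ small, produces a self-consistent inequality of the form $D \le |\al_2-\al'_2|/(1-\lambda) + C(\delta_0,\eps)\,D$ with $C(\delta_0,\eps)\to 0$ as $\delta_0,\eps\to 0$. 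Choosing $\delta_0$ small enough in Proposition \ref{nhim} absorbs the residual and delivers the announced Lipschitz estimate with constant $1/(1-\lambda)$.

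The main obstacle is this circular dependence: the residual in the explicit series is itself governed by the very quantity $D$ one is trying to bound. It is resolved by exploiting the NHIM smallness, which makes the inverse graph maps $g_\al^{-1},g_{\al'}^{-1}$ near-isometric so that $\lambda\mu^{-1}<1$; the compatibility between the forward geometric weighting $\lambda^{k-1}$ from the series and the mild backward expansion $\mu^{-k}$ of the orbital discrepancy is what makes the closed-loop estimate solvable and yields the clean Lipschitz constant.
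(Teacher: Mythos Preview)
Your approach is sound but genuinely different from the paper's, and it does not quite deliver the exact constant claimed. The paper proceeds geometrically: it conjugates $F_\alpha$ by the shear $(x,y)\mapsto(x,y-\Psi_{\alpha'}(x))$ so that the graph of $\Psi_{\alpha'}$ becomes the zero section, then shows that the horizontal strip $\{|y|\le \tfrac{1}{1-\lambda}|\alpha_2-\alpha'_2|\}$ is an attractor block for the conjugated map; since the global attractor (the conjugated graph of $\Psi_\alpha$) must lie inside any attractor block, the bound follows in one stroke. Your route---an explicit backward geometric series for $\tilde\Psi_\alpha$ along $g_\alpha^{-1}$, followed by a bootstrap on $D=\|\tilde\Psi_\alpha-\tilde\Psi_{\alpha'}\|_\infty$---is valid and has the virtue of making the dependence on $\|\phi\|_{C^1}$ and $\|\tilde\Psi_{\alpha'}\|_{\Lip}$ fully explicit. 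However, the closed-loop inequality $D\le\tfrac{|\alpha_2-\alpha'_2|}{1-\lambda}+C(\delta_0,\eps)\,D$ only yields $D\le \tfrac{1}{(1-C)(1-\lambda)}|\alpha_2-\alpha'_2|$ with $C>0$ whenever $\phi\ne 0$; so your last sentence overshoots---you obtain a constant arbitrarily close to, but never equal to, $1/(1-\lambda)$. For the downstream use (continuity of $\rho(g_\alpha)$ in $\alpha$) this is immaterial. Note also that you fix $\alpha_1=\alpha'_1$; if one varies $\alpha_1$ as well, \eqref{xw2} gives $\|g_\alpha-g_{\alpha'}\|_\infty\le|\alpha_1-\alpha'_1|+\lambda D$ and your same argument produces an additional term of order $\tfrac{\delta_0}{(1-\lambda)^2}|\alpha_1-\alpha'_1|$ in the final bound---which is the honest answer, since $\tilde\Psi_\alpha$ does in general depend on $\alpha_1$.
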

\begin{proof}
Fix the perturbation $\phi:\T\to\R$. For simplicity, we denote $F_\alpha := F^{\phi}_\alpha$. Given $\alpha' := (\alpha'_1, \alpha'_2) \in \R^2$, we consider the transformation $\Phi:\R^2\to\R^2$ defined by
\[
\Phi(x,y) = (x, y - \Psi_{\alpha'}(x)).
\]
Its inverse is given by
\[
\Phi^{-1}(x,y) = (x, y + \Psi_{\alpha'}(x)).
\]
Define the conjugated map
\[
K_{\alpha}(x,y) := \Phi \circ F_\alpha \circ \Phi^{-1}(x,y).
\]
A direct calculation yields
\begin{align*}
K_{\alpha}(x,y) = \biggl(x +  \alpha_1+\lambda(y + \Psi_{\alpha'}(x)) +\phi(x), \lambda(y + \Psi_{\alpha'}(x)) + \alpha_2 + \phi(x) - \Psi_{\alpha'}(x)\biggr).
\end{align*}

Let $\chi(x,y,\alpha) := \pi_2 K_\alpha(x,y)$. Since $\Psi_{\alpha'}$ is an invariant graph for $F_{\alpha'}$, Proposition~\ref{hf1} implies
\[
F_{\alpha'}(x, \Psi_{\alpha'}(x)) = (g_{\alpha'}(x), \Psi_{\alpha'}(g_{\alpha'}(x))),
\]
and consequently $\chi(x,0,\alpha') = 0$ for all $x \in \R$. Define
\[
G(x,y,\alpha) := \chi(x,y,\alpha) - y.
\]
The partial derivatives satisfy:
\begin{align}
\frac{\partial \chi}{\partial y}(x,y,\alpha) = \lambda,\quad \frac{\partial \chi}{\partial \alpha_1}(x,y,\alpha) = 0,\quad \frac{\partial \chi}{\partial \alpha_2}(x,y,\alpha) = 1.
\end{align}

It follows that
\begin{equation}\label{contrpo}
0 < \frac{\partial \chi}{\partial y}(x,y,\alpha)  < 1
\end{equation}
for all $x \in \R$.

Applying the implicit function theorem, there exists a neighborhood $U(\alpha')$ and a $C^1$ function $\sigma:\R \times U(\alpha') \to \R$ such that
\[
\chi(x, \sigma(x, \alpha), \alpha) = \sigma(x, \alpha).
\]
This function satisfies:
\begin{equation}
 \sigma(x, \alpha') = 0,\quad
\left|\frac{\partial \sigma}{\partial \alpha_1}\right| = \left|\frac{\frac{\partial \chi}{\partial \alpha_1}}{1 - \frac{\partial \chi}{\partial y}}\right|=0,\quad
\frac{\partial \sigma}{\partial \alpha_2}= \left|\frac{\frac{\partial \chi}{\partial \alpha_2}}{1 - \frac{\partial \chi}{\partial y}}\right|=\frac{1}{1-\lambda}.
\end{equation}

Consequently, for each $\alpha \in U(\alpha')$ and all $x \in \R$, we have the estimate
\[
|\sigma(x, \alpha)| \leq \frac{1}{1-\lambda}|\alpha_2 - \alpha'_2|.
\]

From \eqref{contrpo}, we obtain the non-expensive property:
\begin{align*}
|\pi_2 K_\alpha(x,y) - \pi_2 K_\alpha(x, \sigma(x, \alpha))|
&= |\chi(x,y,\alpha) - \chi(x, \sigma(x, \alpha), \alpha)| \\
&\leq \left|\frac{\partial \chi}{\partial y}\right| |y - \sigma(x, \alpha)| \\
&\leq |y - \sigma(x, \alpha)|.
\end{align*}

This shows that the set
\[
\Omega_1 := \{(x,y) \mid |y| \leq \frac{1}{1-\lambda}|\alpha_2 - \alpha'_2|\}
\]
is an attractor block for $K_\alpha$. Through conjugation, the corresponding set
\[
\Omega_2 := \{(x,y) \mid |y - \Psi_{\alpha'}(x)| \leq \frac{1}{1-\lambda}|\alpha_2 - \alpha'_2|\}
\]
is an attractor block for $F_\alpha$. Since the graph of $\Psi_\alpha$ is an attractor for $F_\alpha$, it must be contained in $\Omega_2$.

This completes the proof of Proposition~\ref{lipo}.
\end{proof}

Next, we consider the perturbed map defined by (\ref{Fp}). By Propositions \ref{hf1} and \ref{nhim}, if $\|\phi\|_{C^r}\leq \delta$, then the perturbed map $F^{\phi}_\alpha$ still admits a $C^r$-invariant graph ${\Gamma}_\alpha^\phi:=\{(x,{\Psi}_{\alpha}(x))\ |\ x\in \R\}$ and
\[F^{\phi}_\alpha(x,{\Psi}_{\alpha}(x))=(g_\al(x),{\Psi}_{\alpha}(g_\al(x))),\]
where $g_\al(x)=x+\al_1+\lambda{\Psi}_{\alpha}(x)+\phi(x)$. By Proposition \ref{lipo}, for each $x\in \R$, $g_\al(x)$ is Lipschitz continuous  with respect to $\al\in \R$.  Note that $g_\al\in D^r(\T)$. From the continuity of $\rho(f)$ with respect to $f\in D^r(\T)$, we obtain
\begin{Lemma}\label{contt}
$\rho(g_\al)$ is continuous with respect to $\al=(\al_1,\al_2)\in \R^2$.
\end{Lemma}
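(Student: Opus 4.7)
The plan is to reduce the continuity of $\rho(g_\alpha)$ to the continuity of the map $\alpha \mapsto g_\alpha$ in the $C^0$ (uniform) topology on $D^r(\T)$, and then invoke the classical fact, due to Poincaré, that the rotation number $\rho : D^0(\T) \to \R$ is continuous with respect to uniform convergence. This turns a question about dynamical invariants into a question about uniform estimates on $g_\alpha$, which is exactly what Proposition~\ref{lipo} provides.

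First I would use the explicit formula $g_\al(x) = x + \al_1 + \lambda \Psi_\al(x) + \phi(x)$ obtained in \eqref{xw2}. Fix $\alpha, \alpha' \in \R^2$; subtracting the two expressions cancels both the identity term and the perturbation $\phi(x)$, leaving
\begin{equation*}
g_\al(x) - g_{\al'}(x) = (\al_1 - \al'_1) + \lambda\bigl(\Psi_\al(x) - \Psi_{\al'}(x)\bigr).
\end{equation*}
Proposition~\ref{lipo} controls the last term uniformly in $x$ by $\frac{1}{1-\lambda}|\al_2 - \al'_2|$, so that
\begin{equation*}
\|g_\al - g_{\al'}\|_{C^0(\T)} \leq |\al_1 - \al'_1| + \frac{\lambda}{1-\lambda}|\al_2 - \al'_2|.
\end{equation*}
In particular, $\al \mapsto g_\al$ is Lipschitz (hence continuous) as a map from $\R^2$ into the space $D^r(\T) \subset D^0(\T)$ equipped with the uniform topology.

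Next I would combine this with the classical continuity of the rotation number: if $f_n \to f$ uniformly in $D^0(\T)$, then $\rho(f_n) \to \rho(f)$. Applying this to any sequence $\al^{(n)} \to \al$ in $\R^2$ yields $g_{\al^{(n)}} \to g_\al$ uniformly by the estimate above, and therefore $\rho(g_{\al^{(n)}}) \to \rho(g_\al)$, which is the desired continuity.

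There is no real technical obstacle here; the only subtle point is that Proposition~\ref{lipo} already carries out the delicate work (via an implicit-function/attractor-block argument using the contraction $0 < \partial_y \chi < 1$), and the continuity of the rotation number on $D^0(\T)$ is a standard fact, so the lemma follows essentially as a clean corollary of these two ingredients.
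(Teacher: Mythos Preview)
Your proposal is correct and follows essentially the same approach as the paper: the paper also deduces the lemma directly from Proposition~\ref{lipo} (which gives uniform Lipschitz dependence of $\Psi_\al$, hence of $g_\al$, on $\al$) together with the classical continuity of the rotation number on $D^r(\T)$. Your write-up simply makes the $C^0$ estimate on $g_\al - g_{\al'}$ explicit, which is a helpful elaboration but not a different argument.
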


\subsection{Diversity of the dynamics on invariant graphs}
We now complete the proof of Theorem~\ref{M1} by establishing Item~(3).

Since $g_\alpha \in D^r(\T)$, there exists $\bar{g}_\alpha \in \mathrm{Diff}^r_+(\T)$ such that $g_\alpha$ is the lift of $\bar{g}_\alpha$ to the universal covering space. Let $\mu_\alpha$ be an ergodic invariant probability measure for $\bar{g}_\alpha$ on $\T$. By \cite[Proposition 2.3]{H11}, the rotation number satisfies
\begin{equation}\label{rotation}
\rho(g_\alpha) = \alpha_1 + \lambda \int_{\T} \Psi_\alpha \, d\mu_\alpha+\int_{\T} \phi \, d\mu_\alpha.
\end{equation}

From Proposition~\ref{nhim}, we have the uniform estimate
\[
\left\|\Psi_\alpha(x) - \frac{\alpha_2}{1-\lambda}\right\|_{C^1} \leq \varepsilon.
\]
This allows us to rewrite \eqref{rotation} as
\begin{equation}\label{rotation-rewritten}
\rho(g_\alpha) = \alpha_1 + \frac{\lambda\alpha_2}{1-\lambda} + \lambda \int_{\T} \left(\Psi_\alpha(x) - \frac{\alpha_2}{1-\lambda}\right) d\mu_\alpha+\int_{\T} \phi \, d\mu_\alpha.
\end{equation}

For fixed $\alpha_2 \in \R$, define the function $h_{\alpha_2}:\R\to\R$ by
\[
h_{\alpha_2}(\alpha_1) := \alpha_1 + \frac{\lambda\alpha_2}{1-\lambda} + \lambda \int_{\T} \left(\Psi_{(\alpha_1,\alpha_2)}(x) - \frac{\alpha_2}{1-\lambda}\right) d\mu_\alpha+\int_{\T} \phi \, d\mu_\alpha.
\]

By Lemma~\ref{contt}, the function $h_{\alpha_2}$ is continuous in $\alpha_1 \in \R$. Moreover, we observe that:
\begin{itemize}
\item For any fixed $\alpha_2$, $h_{\alpha_2}$ is surjective since
\[
\lim_{\alpha_1 \to \pm\infty} h_{\alpha_2}(\alpha_1) = \pm\infty,
\]
and the integral term remains bounded by $\lambda\varepsilon$.

\item Consequently, for any $\omega \in \R$, there exists $\bar{\alpha}_1 \in \R$ such that $\rho(g_{(\bar{\alpha}_1,\alpha_2)}) = \omega$.

\item Similarly, for fixed $\alpha_1$ and any $\omega \in \R$, there exists $\bar{\alpha}_2 \in \R$ satisfying $\rho(g_{(\alpha_1,\bar{\alpha}_2)}) = \omega$.
\end{itemize}

This completes the proof of Theorem~\ref{M1}.

\subsection{Generic frequency changes for a fixed system}
We omit the superscript $\tilde{A}$ in the notation $A$ for simplicity and prove Theorem \ref{Mt3} here.

Inspired by the Arnold family \cite{Ar}, we consider, for a given $\alpha_1 \in \R$, the following map:
\begin{equation}\label{g22}
g_{n,\alpha_1}(x) := x + \alpha_1 + \frac{1}{n} \sin(2\pi x).
\end{equation}
For sufficiently large $n$ (say $n\geq n_0$), we have $g_{n,\alpha_1} \in D^\omega(\T)$. Let $g_{n,\alpha_1}^{-1}(x) := x - \alpha_1 - \xi_n(x)$ denote its inverse. Due to the symmetry of the graphs of $g_{n,\alpha_1}$ and $g^{-1}_{n,\alpha_1}$, we obtain
\[
\int_{\T} \xi_n(x) \, dx = \int_{\T} \frac{1}{n} \sin(2\pi x) \, dx = 0.
\]

By Proposition~\ref{hf1}, the map $F_{\alpha_1}^{\psi_n}$ defined in \eqref{Fp} admits a $C^0$-invariant graph
\[
\Gamma_n := \{(x, \Psi_{n,\alpha_1}(x)) \mid x \in \R\},
\]
if and only if
\begin{equation}\label{g+g}
\frac{1}{1+\lambda}g_{n,\alpha_1}(x) + \frac{\lambda}{1+\lambda}g_{n,\alpha_1}^{-1}(x) = x + \frac{1}{1+\lambda}\left((1-\lambda)\alpha_1  + \psi_n(x)\right).
\end{equation}
We construct
\[\Psi_{n,\alpha_1}(x):=\frac{1}{n} \sin(2\pi g_{n,\al_1}^{-1}(x)),\quad \psi_n(x):=\Psi_{n,\alpha_1}(g_{n,\al_1}(x))-\lambda\Psi_{n,\alpha_1}(x).\]
Then (\ref{g+g}) holds. More precisely, we have
\[
\psi_n(x) = \frac{1}{n} \sin(2\pi x) - \frac{\lambda}{n} \sin(2\pi g_{n,\alpha_1}^{-1}(x)).
\]

We observe the following bounds:
\[
\frac{1}{2} \leq \|Dg_{n,\alpha_1}\|_{C^0} \leq 2, \quad \text{and} \quad \|Dg_{n,\alpha_1}^{-1}\|_{C^0} = \frac{1}{\|Dg_{n,\alpha_1}\|_{C^0}} \leq 2.
\]
By the  Fa\`{a} di Bruno formula (see \cite[Corollary 2.4]{H11} for instance), for each integer $r \geq 1$, there exists a constant $B_r$ depending only on $r$ such that
\begin{itemize}
\item if $r=1$, $\|D^{r} g_{n,\alpha_1}^{-1}\|_{C^0}\in [1-\frac{B_1}{n}, 1+\frac{B_1}{n}]$;
\item if $r\geq 2$, $\|D^{r} g_{n,\alpha_1}^{-1}\|_{C^0}\leq \frac{B_r}{n}$.
\end{itemize}

A direct calculation yields the existence of a constant $C_r$, depending only on $r$, satisfying
\[
\|\psi_n\|_{C^r} \leq \frac{C_r}{n}.
\]

To complete the argument, we construct a $G_\delta$ subset $\mathcal{O} \subset \R$ such that for every $\alpha_1 \in \mathcal{O}$, the circle map induced by the restriction of $F^{\psi_n}_{\alpha_1}$ to $\Gamma_n$ has rotation number different from $\alpha_1$.

According to \cite[Lemma 4.2]{de}, the set
\[
E_n := \{\alpha_1 \in \R \mid \rho(g_{n,\alpha_1}) \text{ is irrational}\}
\]
is nowhere dense in $\R$. Let $O_n := \R \setminus \overline{E}_n$ denote its complement, which is open and dense in $\R$. We can therefore express $O_n$ as a countable union of open intervals:
\[
O_n = \bigcup_{k=1}^{\infty} I_k.
\]
Letting
\[
{O}:= \bigcap_{n=n_0}^{\infty} {O}_n.
\]
Then the set $O$ is a $G_\delta$ set.

Applying \cite[Lemma 4.2]{de} again, we find that for each $\alpha_1 \in I_k$, the rotation number $\rho(g_{n,\alpha_1})$ equals some fixed rational number $r_k$ only depending on $k$. Note that the set of irrational numbers $\R\backslash\mathbb{Q}$ is also a $G_\delta$ set. We then define
\[\mathcal{O} := O\cap\R\backslash\mathbb{Q}.\]
It follows that $\mathcal{O}$ is still a $G_\delta$ set. For each $\al_1\in \mathcal{O}$, $\al_1$ is irrational. However,  the rotation number $\rho(g_{n,\alpha_1})$ is rational for all $n\geq n_0$. Therefore, the set $\mathcal{O}$ is the desired dense $G_\delta$ subset of $\R$ for which the theorem holds.

 \vspace{2em}

	\section{\sc The threshold for the existence of invariant graphs}\label{S4}

\subsection{Persistence under Lipschitz perturbations}
We give a proof of Theorem \ref{M2} here.
\subsubsection{Construction of the graph transform}
Fix $\la\in (0,1)$. Let $I := [-(1-\lambda), 1-\lambda]$ and denote by $\Lip(\T, I)$ the space of 1-periodic Lipschitz maps $\psi: \R \to I$ with Lipschitz constant $\mathcal{L}(\psi)$. For $k > 0$, define
\[
\Lip_k := \{\psi \in \Lip(\T, I) \mid \mathcal{L}(\psi) \leq k\}.
\]

Consider the map $F_{\al_1}^{\phi^\lambda}(x,y) = (X(x,y), Y(x,y))$ where
\begin{align*}
X(x,y) &:= x + \alpha_1 + \lambda y + \phi^\lambda(x), \\
Y(x,y) &:= \lambda y + \phi^\lambda(x).
\end{align*}
For any $\psi \in \Lip_k$, we have the compositions
\begin{align*}
X \circ (\mathrm{Id}, \psi)(x) &= x + \alpha_1 + \lambda \psi(x) + \phi^\lambda(x), \\
Y \circ (\mathrm{Id}, \psi)(x) &= \lambda \psi(x) + \phi^\lambda(x).
\end{align*}
Let $A_\lambda := \|\phi^\lambda\|_{\mathrm{Lip}}$. It follows that $\|\phi^\la\|_{C^0}\leq A_\lambda$.

\begin{Lemma}\label{invertib}
Define the constant
\[
K_1 := \frac{2}{\sqrt{\lambda}} - 1.
\]
If $k < K_1$, then for each $\psi \in \Lip_k$, the map $X \circ (\mathrm{Id}, \psi)$ is invertible with Lipschitz inverse satisfying
\[
\mathcal{L}\left([X \circ (\mathrm{Id}, \psi)]^{-1}\right) \leq \frac{1}{1 - \lambda k - A_\lambda}.
\]
\end{Lemma}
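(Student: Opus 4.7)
The plan is to show that $h := X \circ (\mathrm{Id}, \psi)$, namely
\[
h(x) = x + \alpha_1 + \lambda \psi(x) + \phi^\lambda(x),
\]
is a bi-Lipschitz homeomorphism of $\R$, and to read off the Lipschitz constant of $h^{-1}$ from a one-sided lower Lipschitz estimate for $h$ itself. The statement is essentially an elementary Lipschitz inverse function argument; the only non-routine point is the algebraic verification that the threshold $K_1 = \tfrac{2}{\sqrt{\lambda}} - 1$ is precisely calibrated so that the denominator $1 - \lambda k - A_\lambda$ is strictly positive.

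First I would expand, for $x_1 < x_2$,
\[
h(x_2) - h(x_1) = (x_2 - x_1) + \lambda\bigl(\psi(x_2) - \psi(x_1)\bigr) + \bigl(\phi^\lambda(x_2) - \phi^\lambda(x_1)\bigr),
\]
and use $\mathcal{L}(\psi) \leq k$ together with $\mathcal{L}(\phi^\lambda) = A_\lambda$ to derive the one-sided bound
\[
h(x_2) - h(x_1) \geq (1 - \lambda k - A_\lambda)(x_2 - x_1).
\]
To see that the prefactor is strictly positive, I combine the standing hypothesis of Theorem \ref{M2}, namely $A_\lambda \leq (1 - \sqrt{\lambda})^2 = 1 - 2\sqrt{\lambda} + \lambda$, with the assumption $k < K_1$, which forces $\lambda k < \sqrt{\lambda}(2 - \sqrt{\lambda}) = 2\sqrt{\lambda} - \lambda$. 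Summing these two inequalities yields $\lambda k + A_\lambda < 1$, so $h$ is strictly increasing; the exact identity $\lambda K_1 + (1-\sqrt{\lambda})^2 = 1$ shows that the choice of $K_1$ is sharp for this purpose.

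Finally, since $\psi$ takes values in $I = [-1,1]$ and $\phi^\lambda$ is continuous and $1$-periodic, the function $h - \mathrm{Id} = \alpha_1 + \lambda \psi + \phi^\lambda$ is bounded on $\R$, so $h(x) \to \pm\infty$ as $x \to \pm\infty$. Combined with continuity and strict monotonicity, this makes $h$ a homeomorphism of $\R$ onto $\R$. Setting $x_i := h^{-1}(y_i)$ in the lower Lipschitz bound then yields
\[
|h^{-1}(y_2) - h^{-1}(y_1)| \leq \frac{1}{1 - \lambda k - A_\lambda}\,|y_2 - y_1|,
\]
which is the desired estimate. I do not anticipate any genuine obstacle; the main technical care lies in tracking the two independent Lipschitz contributions $\lambda k$ and $A_\lambda$ and invoking the explicit threshold $K_1$, and this will reappear as the recurring algebraic mechanism in the subsequent graph transform estimates.
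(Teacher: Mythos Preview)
Your proof is correct and follows essentially the same approach as the paper: the paper packages the argument by writing $u(x) := X\circ(\mathrm{Id},\psi)(x) - x$, observing $\mathcal{L}(u) \leq \lambda k + A_\lambda < 1$, and then invoking the standard fact that $\mathrm{Id} + u$ is bi-Lipschitz with inverse Lipschitz constant $\leq (1-\mathcal{L}(u))^{-1}$, which is exactly the one-sided lower Lipschitz estimate you spell out explicitly. Your version is slightly more self-contained in that you verify surjectivity directly via boundedness of $h - \mathrm{Id}$, whereas the paper absorbs this into the cited general fact.
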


\begin{proof}
Define $u(x) := X \circ (\mathrm{Id}, \psi)(x) - x$. Then $u$ is Lipschitz with
\[
|u(x_1) - u(x_2)| \leq (\lambda k + A_\lambda)|x_1 - x_2|.
\]
The map $X \circ (\mathrm{Id}, \psi)$ is invertible if $\mathcal{L}(u) < 1$, in which case
\[
\mathcal{L}\left([X \circ (\mathrm{Id}, \psi)]^{-1}\right) \leq \frac{1}{1 - \mathcal{L}(u)}.
\]
The condition $k < K_1$ implies $\lambda k + (1 - \sqrt{\lambda})^2 < 1$, and consequently
\[
\mathcal{L}(u) \leq \lambda k + A_\lambda \leq \lambda k + (1 - \sqrt{\lambda})^2 < 1.
\]
\end{proof}

When $[X \circ (\mathrm{Id}, \psi)]^{-1}$ exists, we define the graph transform $\mathcal{T}: \Lip_k \to \Lip_k$ by
\[
\mathcal{T}\psi : x \mapsto Y \circ (\mathrm{Id}, \psi) \circ [X \circ (\mathrm{Id}, \psi)]^{-1}(x).
\]
For the graph $\tilde{\Gamma}(\psi) := \{(x, \psi(x)) \mid x \in \R\}$, we have the invariance property
\[
\tilde{\Gamma}(\mathcal{T}\psi) = F(\tilde{\Gamma}(\psi)).
\]

\begin{Lemma}\label{welld}
Define the constant
\[
K_2 := \frac{1}{\sqrt{\lambda}} - 1.
\]
The graph transform $\mathcal{T}: \Lip_{K_2} \to \Lip_{K_2}$ is well-defined, i.e., $\mathcal{T}\psi \in \Lip_{K_2}$ for all $\psi \in \Lip_{K_2}$.
\end{Lemma}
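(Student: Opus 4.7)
\medskip

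\noindent\textbf{Proof plan for Lemma \ref{welld}.}

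The plan is to verify three properties of $\mathcal{T}\psi$ for an arbitrary $\psi \in \Lip_{K_2}$: (i) the defining formula makes sense, i.e.\ the inverse $[X\circ(\mathrm{Id},\psi)]^{-1}$ exists; (ii) $\mathcal{T}\psi$ is a $1$-periodic map into $I=[-1,1]$; and (iii) its Lipschitz constant is bounded by $K_2$. The substantive step is (iii), and the rest is straightforward.

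For (i), I would note $K_2=\frac{1}{\sqrt{\lambda}}-1<\frac{2}{\sqrt{\lambda}}-1=K_1$, so Lemma \ref{invertib} applies and yields the Lipschitz bound
\[
\mathcal{L}\bigl([X\circ(\mathrm{Id},\psi)]^{-1}\bigr)\le \frac{1}{1-\lambda K_2-A_\lambda}.
\]
For the periodicity part of (ii), I would check $X(x+1,y)=X(x,y)+1$ and $Y(x+1,y)=Y(x,y)$, which combined with $1$-periodicity of $\psi$ and $\phi^\lambda$ shows that $[X\circ(\mathrm{Id},\psi)]^{-1}(x+1)=[X\circ(\mathrm{Id},\psi)]^{-1}(x)+1$ and hence $\mathcal{T}\psi(x+1)=\mathcal{T}\psi(x)$. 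For the range statement, writing $z:=[X\circ(\mathrm{Id},\psi)]^{-1}(x)$ one gets $|\mathcal{T}\psi(x)|=|\lambda\psi(z)+\phi^\lambda(z)|\le \lambda+A_\lambda\le \lambda+(1-\sqrt{\lambda})^2=1-2\sqrt{\lambda}(1-\sqrt{\lambda})\le 1$.

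For (iii), the key estimate is to compose the Lipschitz bounds of the two factors. Since $Y\circ(\mathrm{Id},\psi)(x)=\lambda\psi(x)+\phi^\lambda(x)$, one immediately has
\[
\mathcal{L}(Y\circ(\mathrm{Id},\psi))\le \lambda K_2+A_\lambda.
\]
Combining with the inverse bound from Lemma \ref{invertib},
\[
\mathcal{L}(\mathcal{T}\psi)\le \frac{\lambda K_2+A_\lambda}{1-\lambda K_2-A_\lambda}.
\]
Now I plug in $\lambda K_2=\sqrt{\lambda}(1-\sqrt{\lambda})=\sqrt{\lambda}-\lambda$ and use $A_\lambda\le (1-\sqrt{\lambda})^2=1-2\sqrt{\lambda}+\lambda$ to get $\lambda K_2+A_\lambda\le 1-\sqrt{\lambda}$ and consequently $1-\lambda K_2-A_\lambda\ge \sqrt{\lambda}$. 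Therefore
\[
\mathcal{L}(\mathcal{T}\psi)\le \frac{1-\sqrt{\lambda}}{\sqrt{\lambda}}=\frac{1}{\sqrt{\lambda}}-1=K_2,
\]
which is precisely the desired invariance of the Lipschitz class.

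The only delicate point in this plan is ensuring that the hypothesis $\|\phi^\lambda\|_{\mathrm{Lip}}\le (1-\sqrt{\lambda})^2$ is exactly strong enough to make the numerator and denominator in the composition estimate balance to $K_2$; any weaker hypothesis would fail, and this algebraic matching is really the content of the lemma. Once that cancellation is observed, the proof reduces to the three routine checks above.
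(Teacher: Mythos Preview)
Your proposal is correct and follows essentially the same approach as the paper: both verify the $C^0$ bound $\|\mathcal{T}\psi\|_{C^0}\le\lambda+(1-\sqrt{\lambda})^2<1$ and then compose the Lipschitz estimates $\mathcal{L}(Y\circ(\mathrm{Id},\psi))\le\lambda K_2+A_\lambda$ and $\mathcal{L}([X\circ(\mathrm{Id},\psi)]^{-1})\le(1-\lambda K_2-A_\lambda)^{-1}$, reducing to the same algebraic identity $\frac{\lambda K_2+(1-\sqrt{\lambda})^2}{1-\lambda K_2-(1-\sqrt{\lambda})^2}=K_2$. Your proposal is slightly more explicit in checking periodicity and in invoking Lemma~\ref{invertib} via $K_2<K_1$, but the substance is identical.
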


\begin{proof}
For $\psi \in \Lip_{K_2}$, we first establish the uniform bound:
\[
\|\mathcal{T}\psi\|_{C^0} \leq \lambda \|\psi\|_{C^0} + A_\lambda \leq \lambda(1-\lambda) + (1 - \sqrt{\lambda})^2 < 1-\lambda.
\]

The Lipschitz estimate follows from:
\begin{align*}
|\mathcal{T}\psi(x_1) - \mathcal{T}\psi(x_2)|
&\leq \frac{\lambda \mathcal{L}(\psi) + A_\lambda}{1 - \lambda \mathcal{L}(\psi) - A_\lambda}|x_1 - x_2| \\
&\leq \frac{\lambda K_2 + (1 - \sqrt{\lambda})^2}{1 - \lambda K_2 - (1 - \sqrt{\lambda})^2}|x_1 - x_2| \\
&= K_2 |x_1 - x_2|.
\end{align*}
Thus $\mathcal{T}\psi \in \Lip_{K_2}$.
\end{proof}

\subsubsection{Contraction mapping}
Note that $\Lip_k$ is a closed subspace of the Banach space $C^0(\T,I)$ equipped with the $C^0$-metric, and hence is complete. To complete the proof of Theorem~\ref{M2}, it remains to show that the graph transform is a contraction mapping. Assuming the invertibility of $X \circ (\Id,\psi)$ and the well-definedness of $\mathcal{T}$, we establish this through the following lemma.

\begin{Lemma}\label{ctrmap}
Define the constant
\[
K_3 := \frac{1}{\lambda} - 1.
\]
If $k < K_3$, then the graph transform $\mathcal{T}: \Lip_k \to \Lip_k$ is a contraction. Specifically, for any $\psi_1, \psi_2 \in \Lip_k$,
\[
\|\mathcal{T}\psi_1 - \mathcal{T}\psi_2\|_{C^0} \leq l \|\psi_1 - \psi_2\|_{C^0},
\]
where $0 < l < 1$.
\end{Lemma}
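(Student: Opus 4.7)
The plan is to estimate $|\mathcal{T}\psi_1(x) - \mathcal{T}\psi_2(x)|$ pointwise and exploit the implicit definition of the graph transform. For each $x \in \R$ and $i \in \{1,2\}$, I would introduce the preimage $z_i := [X \circ (\Id, \psi_i)]^{-1}(x)$, which by Lemma~\ref{invertib} is the unique solution of
\[
x = z_i + \alpha_1 + \lambda \psi_i(z_i) + \phi^\lambda(z_i),
\]
so that $\mathcal{T}\psi_i(x) = \lambda \psi_i(z_i) + \phi^\lambda(z_i)$.

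The crucial algebraic observation is obtained by subtracting the two defining equations for $z_1$ and $z_2$:
\[
z_2 - z_1 = \lambda\bigl[\psi_1(z_1) - \psi_2(z_2)\bigr] + \bigl[\phi^\lambda(z_1) - \phi^\lambda(z_2)\bigr],
\]
and the right-hand side is precisely $\mathcal{T}\psi_1(x) - \mathcal{T}\psi_2(x)$. Thus
\[
|\mathcal{T}\psi_1(x) - \mathcal{T}\psi_2(x)| = |z_1 - z_2|,
\]
and the problem reduces to controlling $|z_1 - z_2|$ by $\|\psi_1 - \psi_2\|_{C^0}$. This identification is what makes the contraction loop close cleanly; without it, the naive bound would leave $|z_1 - z_2|$ floating on both sides of the inequality with no way to eliminate it.

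To produce the $|z_1 - z_2|$ bound, I would split
\[
\psi_1(z_1) - \psi_2(z_2) = \bigl[\psi_1(z_1) - \psi_1(z_2)\bigr] + \bigl[\psi_1(z_2) - \psi_2(z_2)\bigr],
\]
applying $\mathcal{L}(\psi_1) \leq k$ to the first summand and the $C^0$-distance to the second, while bounding $|\phi^\lambda(z_1) - \phi^\lambda(z_2)| \leq A_\lambda |z_1-z_2|$. Substituting into the identity above and rearranging gives
\[
(1 - \lambda k - A_\lambda)\,|z_1 - z_2| \leq \lambda \|\psi_1 - \psi_2\|_{C^0},
\]
so that $l := \lambda / (1 - \lambda k - A_\lambda)$ is the desired contraction constant.

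The final step is to verify $0 < l < 1$, which amounts to the inequality $\lambda k + A_\lambda < 1 - \lambda$. The hypothesis $k < K_3 = \frac{1}{\lambda}-1$ yields $\lambda k < 1 - \lambda$, and the remaining room is absorbed by the perturbation smallness $A_\lambda \leq (1-\sqrt{\lambda})^2$ assumed in Theorem~\ref{M2}. The main obstacle I expect is a book-keeping one: the lemma will be applied in the regime $k = K_2 = \frac{1}{\sqrt{\lambda}} - 1$, where a direct computation gives $1 - \lambda K_2 - A_\lambda \geq \sqrt{\lambda}$, and hence the explicit bound $l \leq \sqrt{\lambda}$, which is the form actually needed for the Banach fixed-point argument that concludes Theorem~\ref{M2}.
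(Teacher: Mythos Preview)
Your identity $|\mathcal{T}\psi_1(x) - \mathcal{T}\psi_2(x)| = |z_1 - z_2|$ is correct and specific to this map, and the estimate producing $l = \lambda/(1 - \lambda k - A_\lambda)$ is sound. But this does not establish the lemma as stated: for your $l$ to be below $1$ you need $\lambda k + A_\lambda < 1 - \lambda$, which is \emph{strictly stronger} than $k < K_3$ whenever $A_\lambda > 0$. Your sentence ``the remaining room is absorbed by $A_\lambda \leq (1-\sqrt\lambda)^2$'' is false near the boundary: take $\lambda = 1/2$ and $k = 0.99 < K_3 = 1$; then $1 - \lambda - \lambda k = 0.005$, while $(1-\sqrt\lambda)^2 \approx 0.086$. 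So as a proof of Lemma~\ref{ctrmap} in the generality claimed there is a gap, even though your argument is perfectly adequate for the only case actually used downstream, $k = K_2$, where you correctly get $l \leq \sqrt\lambda$.

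The paper closes this gap by a different decomposition that exploits the standing hypothesis $\mathcal{T}:\Lip_k \to \Lip_k$. With $x := z_1$ in your notation, it inserts the intermediate value $Y(x,\psi_2(x)) = \mathcal{T}\psi_2\bigl(X(x,\psi_2(x))\bigr)$ and bounds $|\mathcal{T}\psi_2(X(x,\psi_2(x))) - \mathcal{T}\psi_2(X(x,\psi_1(x)))|$ using $\mathcal{L}(\mathcal{T}\psi_2) \leq k$ rather than $\mathcal{L}(\psi_1) \leq k$ and $\mathcal{L}(\phi^\lambda)\le A_\lambda$. This yields $l = \lambda(1+k)$, which is independent of $A_\lambda$ and satisfies $l < 1$ exactly when $k < K_3$. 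At $k = K_2$ the two constants coincide (both equal $\sqrt\lambda$), so your route and the paper's are equivalent for Theorem~\ref{M2}; the paper's version simply matches the threshold $K_3$ announced in the lemma.
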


\begin{proof}
Fix $z \in \T$ and $\psi_1, \psi_2 \in \Lip_k$. Let $(x,y)$ be the point on the graph of $\psi_1$ determined by
\[
x := [X \circ (\Id, \psi_1)]^{-1}(z), \quad y := \psi_1(x).
\]
By definition of the graph transform, we have:
\begin{align*}
\mathcal{T}\psi_1(z) &= Y(\Id, \psi_1)(x), \\
\mathcal{T}\psi_2(z) &= \mathcal{T}\psi_2 \circ X(x, \psi_1(x)) = Y(\Id, \psi_2)(x'),
\end{align*}
where $x' := [X \circ (\Id, \psi_2)]^{-1} \circ X(x, \psi_1(x))$.

The difference can be estimated as:
\begin{align*}
|\mathcal{T}\psi_1(z) - \mathcal{T}\psi_2(z)|
&\leq |Y(\Id, \psi_1)(x) - Y(\Id, \psi_2)(x)| \\
&\quad + |Y(\Id, \psi_2)(x) - \mathcal{T}\psi_2 \circ X(\Id, \psi_1)(x)| \\
&\leq \lambda |\psi_1(x) - \psi_2(x)| + k\lambda |\psi_1(x) - \psi_2(x)| \\
&= (\lambda + k\lambda) |\psi_1(x) - \psi_2(x)|.
\end{align*}

This yields the uniform estimate:
\[
\|\mathcal{T}\psi_1 - \mathcal{T}\psi_2\|_{C^0} \leq l \|\psi_1 - \psi_2\|_{C^0},
\]
where $l := \lambda(1 + k)$. When $k < K_3 = \frac{1}{\lambda} - 1$, we have $l < 1$, proving that $\mathcal{T}$ is indeed a contraction.
\end{proof}

Note that for each $\lambda\in (0,1)$, we have
\[K_2<\min\{K_1,K_3\}.\]
It follows that the map $\mathcal{T} \colon \mathrm{Lip}_{K_2} \to \mathrm{Lip}_{K_2}$ admits a unique fixed point $\Psi$. Moreover, the graph
\begin{equation}\label{tGp}
\tilde{\Gamma}(\Psi) := \bigl\{ \bigl(x, \Psi(x)\bigr) \bigm| x \in \mathbb{R} \bigr\}
\end{equation}
is the unique Lipschitz invariant graph for $F_{\al_1}^{\phi^\lambda}$.

 \vspace{2em}

\subsection{Persistence under $C^1$ perturbations}
We prove Theorem \ref{M22} here. The proof is inspired by \cite[Section 4]{HPS} and \cite[Proof of Theorem 3.1]{BB}).
\subsubsection{The cone condition}
Denote
\[\beta:=\frac{1}{\sqrt{\lambda}}-1.\]
 We denote $F$ for $F_{\al_1}^{\phi^\lambda}$, and we use $\Gamma$ instead of $\tilde{\Gamma}$ to denote the invariant graph by $F$  for simplicity.
For $z\in\R^2$, we consider the cone  as follows:
\[\mathcal{C}_\beta(z):=\{v=(v_1,v_2)\in\R^2\ |\ |v_2|\leq \beta |v_1|\},\]
where we identified the Euclidean space $\R^2$ and its tangent space $\mathrm{T}_z\R^2$. The inner product in $\R^2$ is given by the standard one. Recall the projection $\pi_1:\R^2\to\R$ via
\[\pi_1:(x,\Psi(x))\mapsto x,\]
which means $\pi_1^{-1}(x)\in \Gamma$ for each $x\in\R$. Moreover, we denote the cone along $\Gamma$ by $\mathcal{C}_\beta(x)$ instead of  $\mathcal{C}_\beta(\pi_1^{-1}(x))$ for simplicity. Since the invariant graph $\Gamma$ is {\it a priori} only Lipschitz, we also need to consider the tangent cone along $\Gamma=\{(x,\Psi(x))\ |\ x\in\R\}$.
\[TC_\Gamma(x):=\left\{v\in\R^2\ \left|\ v=\mu\lim_{n\to+\infty}\frac{(x_n,\Psi(x_n))^T-(x,\Psi(x))^T}{\|(x_n,\Psi(x_n))^T-(x,\Psi(x))^T\|}\right.,\quad \forall \mu\in\R\right\},\]
where  $a^T$ denotes the transpose of $a$. Let us recall that there exists $g\in D^0(\T)$ such that
\[F(x,\Psi(x))=(g(x),\Psi(g(x))).\]
A direct calculation implies that for each $v\in TC_\Gamma(x)$, there exists $\mu\in\R$ such that
\[DF(\pi_1^{-1}(x))v=\mu\lim_{n\to+\infty}\frac{(g(x_n),\Psi(g(x_n)))^T-(g(x),\Psi(g(x)))^T}{\|(g(x_n),\Psi(g(x_n)))^T-(g(x),\Psi(g(x)))^T\|}.\]
It follows that
\begin{equation}\label{cooncd}
DF(\pi_1^{-1}(x))\cdot\left(TC_\Gamma(x)\right):=\left\{DF(\pi_1^{-1}(x))v\ |\ v\in TC_\Gamma(x)\right\}\subseteq TC_\Gamma(g(x)),
\end{equation}

Following \cite[Proof of Theorem 3.1]{BB}), we need to verify the following two conditions:
\begin{itemize}
\item [(1)] $TC_\Gamma(x)\subseteq \mathcal{C}_\beta(x)$ for each $x\in \R$;
\item [(2)] $DF(\pi_1^{-1}(x))\cdot(\mathcal{C}_\beta(x))\subseteq \mathrm{interior}\ \mathcal{C}_\beta(x)\cup \{(0,0)\}$  for each $x\in \R$.
\end{itemize}
By Lemma \ref{welld} and (\ref{tGp}), we know that $\mathcal{L}(\Psi)\leq \frac{1}{\sqrt{\lambda}}-1$, which means Item (1) holds.
For Item (2), it follows directly from the definition of $F$ that for any vector $v = (v_1, v_2) \in \mathcal{C}_\beta(x)$, we have
\[
DF(\pi_1^{-1}(x))v =
\begin{pmatrix}
1+\phi'(x) & \lambda \\
\phi'(x) & \lambda
\end{pmatrix}
\begin{pmatrix}
v_1 \\
v_2
\end{pmatrix}
=
\begin{pmatrix}
v_1 + \phi'(x)v_1 + \lambda v_2 \\
\phi'(x)v_1 + \lambda v_2
\end{pmatrix}.
\]
Note that $\|\phi\|_{C^1} < (1 - \sqrt{\lambda})^2$ and $|v_2 / v_1| \leq \beta = \frac{1}{\sqrt{\lambda}} - 1$. It is straightforward to verify that
\begin{equation}\label{test2}
\frac{|v_1 + \phi'(x)v_1 + \lambda v_2|}{|\phi'(x)v_1 + \lambda v_2|}
= \frac{|1 + \phi'(x) + \lambda \frac{v_2}{v_1}|}{|\phi'(x) + \lambda \frac{v_2}{v_1}|}
< \beta,
\end{equation}
which confirms the validity of Item (2).

\subsubsection{Differentiability}
Let us recall a classical result regarding the geometrical criterion on the differentiability of a Lipschitz submanifold (see \cite[Lemma 4.2]{BB} for instance).
\begin{Lemma}\label{difli}
Let \( Z \) be a Lipschitz submanifold of dimension \( n \). If for every \( z \in Z \), the tangent cone \( TC_Z(z) \) is contained in an \( n \)-dimensional space \( L(z) \), then \( Z \) is a differentiable submanifold with \( T_zZ = L(z) \). If moreover \( z \mapsto L(z) \) is continuous, then \( Z \) is of class \( C^1 \).
\end{Lemma}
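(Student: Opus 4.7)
The plan is to argue locally at each point $z_0 \in Z$: I would use the containment $TC_Z(z_0) \subseteq L(z_0)$ to realize $Z$ near $z_0$ as a Lipschitz graph over $L(z_0)$ whose ``vertical'' deviation grows sublinearly, which is precisely the statement that this graph is differentiable at $z_0$ with tangent space $L(z_0)$.

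Concretely, I would fix $z_0 \in Z$, choose ambient coordinates so that $L(z_0) = \R^n \times \{0\}$, and write $\pi, \pi^\perp$ for the projections onto the horizontal $\R^n$ and its orthogonal complement. A standard compactness-on-the-sphere argument would upgrade the pointwise tangent-cone hypothesis to the uniform bound
\[
\|\pi^\perp(z - z_0)\| \leq \eps\,\|\pi(z - z_0)\| \qquad \text{for all } z \in Z \text{ close enough to } z_0,
\]
for every prescribed $\eps > 0$: otherwise one could extract a sequence $z_k \to z_0$ whose normalized secants stay bounded away from $L(z_0)$, and any subsequential limit would violate $TC_Z(z_0) \subseteq L(z_0)$. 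Combined with the fact that $Z$ is an $n$-dimensional Lipschitz submanifold, this bound forces the restriction of $\pi$ to some neighborhood $V$ of $z_0$ in $Z$ to be a bi-Lipschitz homeomorphism onto an open subset $U$ of $\R^n$. Consequently $V = \{(x, f(x)) : x \in U\}$ for a Lipschitz function $f$ with $f(0) = 0$, and the uniform bound reads $\|f(x)\| \leq \eps\,\|x\|$ near $0$. Since $\eps$ is arbitrary, $f$ is differentiable at $0$ with $Df(0) = 0$, giving $T_{z_0}Z = L(z_0)$.

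For the $C^1$-statement I would invoke the standard principle that a differentiable submanifold whose tangent spaces depend continuously on the base point is $C^1$: working in the graph representation of $Z$ over the fixed subspace $L(z_0)$, the derivative of the graph function at a nearby point $z$ is determined by the slope of $L(z)$ relative to $L(z_0)$ through an explicit linear-algebra formula, and continuity of $z \mapsto L(z)$ transfers to continuity of this derivative. The main obstacle I anticipate is the injectivity of $\pi$ on $V$, i.e., ensuring that no second ``sheet'' of $Z$ lies above the same horizontal fiber near $z_0$; the tangent-cone hypothesis alone does not immediately rule this out, and one has to combine the uniform cone estimate with the intrinsic Lipschitz parametrization of $Z$ (propagating the cone bound along paths inside $Z$) in order to suppress any vertical separation in $V$.
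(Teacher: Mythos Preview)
The paper does not prove Lemma~\ref{difli}; it simply quotes it as a classical result and refers the reader to \cite[Lemma 4.2]{BB}. There is therefore no ``paper's own proof'' to compare against. Your sketch is the standard route and is essentially what one finds in the cited reference: reduce to a local graph over $L(z_0)$, convert the tangent-cone containment into a sublinear secant bound via compactness of the unit sphere, read off differentiability at the base point, and then observe that the graph derivative is an explicit continuous function of the subspace $L(z)$.

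The one place that deserves a firmer argument is exactly the point you flag: obtaining a neighborhood $V$ of $z_0$ in $Z$ on which the projection $\pi$ onto $L(z_0)$ is bi-Lipschitz. Your secant estimate $\|\pi^\perp(z-z_0)\|\le\eps\,\|\pi(z-z_0)\|$ relates each nearby $z$ to $z_0$, but it does \emph{not} by itself prevent two distinct points $z_1,z_2\in Z$ near $z_0$ from sharing the same $\pi$-image. The clean way to close this is not to ``propagate the cone bound along paths'' but to use the Lipschitz-submanifold hypothesis directly: near $z_0$, $Z$ is already a Lipschitz graph over \emph{some} $n$-plane $P$, and then the secant limits along the $P$-directions produce tangent-cone vectors whose $P$-components span $P$. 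Since these vectors all lie in $L(z_0)$, the projection $L(z_0)\to P$ is onto, hence an isomorphism, so $L(z_0)$ is transverse to $P^\perp$ and the graph over $P$ rewrites as a Lipschitz graph over $L(z_0)$. Once you insert this step, the rest of your outline goes through without change.
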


In view of  Item (2) in last subsection, we have (see \cite[Proof of Theorem 1.2]{Ne}) for each $x\in\R$, the set
\[L(x):=\cap_{k\geq 0}DF^{k}(\pi_1^{-1}(g^{-k}(x)))\cdot (\mathcal{C}_\beta(g^{-k}(x)))\subseteq \mathcal{C}_\beta(x)\]
is a 1-dimensional subspace of $T_{\pi_1^{-1}(x)}\R^2$.

By (\ref{cooncd}), there holds for each $x\in\R$,
\[TC_{\Gamma}(x)=\cap_{k\geq 0}DF^{k}(\pi_1^{-1}(g^{-k}(x)))\cdot (TC_{\Gamma}(g^{-k}(x))).\]
By construction, we have
\[\cup_{x\in\R}TC_{\Gamma}(x)\subseteq \cup_{x\in\R}\mathcal{C}_\beta(x),\]
which yields from the definition of $L(x)$ that
\[\cup_{x\in\R}TC_{\Gamma}(x)\subseteq \cup_{x\in\R}L(x).\]
By Lemma \ref{difli}, the function $\Psi$ is differentiable, with $T_{\pi_1^{-1}(x)}\Gamma=L(x)$ for each $x\in \R$.

\subsubsection{Continuous differentiability}
We prove that $\Psi$ is continuously differentiable. Given a convergent sequence \( x_n \to x \) in $\mathbb{R}$, we need to show that
\[
L(x_n) := T_{\pi_1^{-1}(x_n)}\Gamma
\]
converges to
\[
L(x) := T_{\pi_1^{-1}(x)}\Gamma
\]
in the Hausdorff topology. By compactness of the Grassmannian, it suffices to show that \( L(x) \) is the unique accumulation point of the sequence \( L(x_n) \).

Assume, for the sake of contradiction, that there exists an accumulation point \( L'(x) \neq L(x) \) of the sequence \( L(x_n) \). Note that for each \( n \), we have
\[
L(x_n) \subseteq \mathcal{C}_\beta(x_n),
\]
where \( \mathcal{C}_\beta(x) \) is continuous with respect to \( x \in \mathbb{R} \) in the Hausdorff topology. Taking limits, it follows that
\[
L'(x) \subseteq \mathcal{C}_\beta(x).
\]

For each \( k \geq 0 \), the continuity of \( DF^{-k} \) with respect to \( x \in \mathbb{R} \) implies that
\[
DF^{-k}(\pi_1^{-1}(x)) \cdot L'(x)
\]
is an accumulation point of the sequence
\[
DF^{-k}(\pi_1^{-1}(x_n)) \cdot L(x_n) = T_{\pi_1^{-1}(g^{-k}(x_n))}\Gamma.
\]
Again, by continuity of \( \mathcal{C}_\beta \), we have:
\[
DF^{-k}(\pi_1^{-1}(x)) \cdot L'(x) \subseteq \mathcal{C}_\beta(g^{-k}(x)).
\]
By the definition of \( L(x) \), it follows that:
\[
L'(x) \subseteq \bigcap_{k \geq 0} DF^k(\pi_1^{-1}(g^{-k}(x))) \cdot \mathcal{C}_\beta(g^{-k}(x)) = L(x).
\]
Since \( L'(x) \) and \( L(x) \) have the same dimension, we must have \( L'(x) = L(x) \), a contradiction. Therefore, \( L(x) \) is the unique accumulation point of \( L(x_n) \), and we conclude that \( x \mapsto L(x) \) is continuous in the Hausdorff topology.

Thus, $\Psi$ is continuously differentiable.

\begin{Remark}\label{needb}
In the proof of Theorem \ref{M222} below, we require a slight generalization of Theorem \ref{M22}. Given parameters $\lambda_1, \lambda_2 \in (0,1)$, consider the map
\begin{equation}\label{Fpwp}
F^\phi(x,y) = \big(x + \alpha_1 + \lambda_1 y + \phi_1(x),\; \lambda_2 y + \phi_2(x)\big).
\end{equation}
Let $\tilde{\lambda} := \max\{\lambda_1, \lambda_2\}$. Then for every $C^1$ perturbation $\phi_1, \phi_2$ satisfying
\[
\max\left\{\|\phi_1\|_{C^1},\|\phi_2\|_{C^1}\right\} < (1 - \sqrt{\tilde{\lambda}})^2,
\]
the map $F^\phi$ admits a unique $C^1$ invariant graph.

To prove this, it suffices to verify that if we set $K_2 := \frac{1}{\sqrt{\tilde{\lambda}}} - 1$, then the following inequality holds:
\[
\frac{\lambda_2 K_2 + (1 - \sqrt{\tilde{\lambda}})^2}{1 - \lambda_1 K_2 - (1 - \sqrt{\tilde{\lambda}})^2} \leq K_2.
\]
This ensures that the cone condition is preserved, allowing us to apply the same argument as in the proof of Theorem \ref{M22}, including the verification of inequality~\eqref{test2}.

Indeed, we consider two cases:

\medskip
\noindent\textbf{Case 1:} $\lambda_1 \leq \lambda_2$. Then $\tilde{\lambda} = \lambda_2$, and we compute
\[
\frac{\lambda_2 K_2 + (1 - \sqrt{\lambda_2})^2}{1 - \lambda_1 K_2 - (1 - \sqrt{\lambda_2})^2}
\leq \frac{\lambda_2 K_2 + (1 - \sqrt{\lambda_2})^2}{1 - \lambda_2 K_2 - (1 - \sqrt{\lambda_2})^2}
\leq K_2.
\]

\noindent\textbf{Case 2:} $\lambda_1 > \lambda_2$. Then $\tilde{\lambda} = \lambda_1$, and we compute
\[
\frac{\lambda_2 K_2 + (1 - \sqrt{\lambda_1})^2}{1 - \lambda_1 K_2 - (1 - \sqrt{\lambda_1})^2}
\leq \frac{\lambda_1 K_2 + (1 - \sqrt{\lambda_1})^2}{1 - \lambda_1 K_2 - (1 - \sqrt{\lambda_1})^2}
\leq K_2.
\]

In both cases, the inequality holds, completing the proof of the generalization.
\end{Remark}

 \vspace{2em}

\section{\sc The normal form and sharpness of $\gm$}
The proof of Theorem \ref{M222} is inspired by \cite{Ma}.
Fix $\alpha_2 = 0$ and assume that $\alpha_1$ is a Diophantine number. We rewrite the model defined by \eqref{Fp} as
\begin{equation}\label{Fpw}
F^\phi_{\lambda}(x, y) = \big(x + \alpha_1 + \lambda y + \phi(x),\; \lambda y + \phi(x)\big),
\end{equation}
where $\phi$ is a $1$-periodic function satisfying the normalization condition
\[
\int_\T \phi(x)\,dx = 0.
\]
Let $C^\omega(\mathbb{T})$ denote the set of real-analytic functions on the circle $\mathbb{T}$. Given any $\phi \in C^\omega(\mathbb{T})$, there exists $s := s(\phi) > 0$ such that $\phi$ admits a holomorphic extension to the complex strip
\[
\mathbb{T}_s := \left\{ \theta \in \mathbb{C}/\mathbb{Z} \ \middle| \ |\mathrm{Im}\,\theta| \leq s \right\}.
\]
We say that $\phi:\mathbb{T}_s \to \mathbb{C}$ is well-defined if there exists a unique holomorphic extension $\bar{\phi}$ to $\mathbb{T}_s$ with finite Banach norm
\begin{equation}\label{renorm}
\|\phi\|_s := \sup_{\theta \in \mathbb{T}_s} |\bar{\phi}(\theta)|.
\end{equation}
Hereafter, we will use the notation $\|\cdot\|_{C^\omega}$ instead of $\|\cdot\|_s$ whenever the analytic radius does not need to be emphasized.

\subsection{R\"{u}ssmann's normal form}

By R\"{u}ssmann's normal form \cite{R1} (see also \cite[Theorem 5.4]{Mas1} for a higher-dimensional generalization), we have the following result:

\begin{Proposition}\label{prrusss}
There exists $\varepsilon_0 > 0$ such that for each $\varepsilon \in (0, \varepsilon_0]$, if $\|\phi\|_{C^\omega} \leq \varepsilon$, then there exists a function $\Psi_\lambda \in C^\omega(\mathbb{T})$ and a map $h_\lambda \in D^\omega(\mathbb{T})$ with $h_\lambda(0) = 0$ such that
\begin{equation}\label{fhold}
F^\phi_\lambda(x, y) = \big(h_\lambda \circ R_{\alpha_1} \circ h_\lambda^{-1}(x),\; \nu_\lambda + \Psi_\lambda(h_\lambda \circ R_{\alpha_1} \circ h_\lambda^{-1}(x))\big).
\end{equation}
Moreover,
\[
\|\Psi_\lambda\|_{C^\omega} = O(\varepsilon), \quad \|h_\lambda - \mathrm{Id}\|_{C^\omega} = O(\varepsilon), \quad \nu_\lambda = O(\varepsilon).
\]
\end{Proposition}

\begin{Remark}
We refer to \cite[Theorem A.1]{Mas1} for a more quantitative estimate of the $O(\varepsilon)$ terms. In particular, from the proof of \cite[Theorem 5.1]{Mas1}, one can see that for $\lambda$ bounded away from zero (e.g., $\lambda \in [\frac{1}{2}, 1]$), the constants involved in the $O(\varepsilon)$ terms can be taken independently of $\lambda$.
\end{Remark}

Proposition \ref{prrusss} leads to the following corollary:

\begin{Lemma}
Let $\gamma \in (1, 2)$. Then there exists $\lambda_0 \in (0, 1)$ such that for every $\lambda \in [\lambda_0, 1)$, if
\[
\|\phi\|_{C^\omega} \leq (1 - \lambda)^\gamma,
\]
then there exists a function $\Psi_\lambda \in C^\omega(\mathbb{T})$ and a map $h_\lambda \in D^\omega(\mathbb{T})$ with $h_\lambda(0) = 0$ such that \eqref{fhold} holds, and
\[
\|\Psi_\lambda\|_{C^\omega} = O((1 - \lambda)^\gamma), \quad \|h_\lambda - \mathrm{Id}\|_{C^\omega} = O((1 - \lambda)^\gamma), \quad \nu_\lambda = O((1 - \lambda)^\gamma).
\]
\end{Lemma}

In the following, we shall denote $\varepsilon := 1 - \lambda$ and $\alpha := \alpha_1$, and omit the subscript $\lambda$ for simplicity.

\subsection{Changes of coordinates}

The proof of Theorem \ref{M222} will be completed through a sequence of coordinate transformations. We begin by introducing the change of variables
\[
H: (x, y) \mapsto (\xi, \eta) \quad \text{via} \quad
\left\{
  \begin{array}{l}
    \xi = h^{-1}(x), \\
    \eta = y - \Psi(x),
  \end{array}
\right.
\]
where $h$ and $\Psi$ are as given in Proposition \ref{prrusss}. Then the transformed map $\bar{F}^\phi := H \circ F^{\phi} \circ H^{-1}$ takes the form
\begin{align*}
\bar{F}^\phi(\xi, \eta)
&= \left(h^{-1}(h(\xi + \alpha) + \lambda \eta),\; \nu + \Psi(h(\xi + \alpha)) + \lambda \eta - \Psi(h(\xi + \alpha) + \lambda \eta)\right).
\end{align*}

Since $h \in D^\omega(\mathbb{T})$, we have $h^{-1} \in D^\omega(\mathbb{T})$ and
\[
\|h^{-1} - \mathrm{Id}\|_{C^\omega} = O(\varepsilon).
\]
Thus, there exists $\bar{s} > 0$, depending on the analyticity radii of $\phi$ and $\Psi$, such that for all $\xi \in \mathbb{T}$ and $|\lambda \eta| \leq \bar{s}$, we have the expansion:
\begin{equation}
\bar{F}^\phi(\xi, \eta) = \left(\xi + \alpha + \sum_{i=1}^\infty A_i(\xi)(\lambda \eta)^i,\; \nu + \sum_{i=1}^\infty B_i(\xi)(\lambda \eta)^i\right),
\end{equation}
where
\[
A_i(\xi) = \frac{1}{i!} D^i h^{-1}(h(\xi + \alpha)), \qquad
B_i(\xi) = -\frac{1}{i!} D^i \Psi(h(\xi + \alpha)) \quad \text{for } i \geq 2,
\]
and
\[
A_1(\xi) = 1 + O(\varepsilon^\gamma), \quad B_1(\xi) = 1 - D\Psi(h(\xi + \alpha)) = 1 + O(\varepsilon^\gamma).
\]
In particular,
\[
A_i(\xi) = O(\varepsilon^\gamma), \quad B_i(\xi) = O(\varepsilon^\gamma) \quad \text{for all } i \geq 2.
\]

\begin{Lemma}\label{ketra}
Let $\alpha$ be a Diophantine number. Then there exists $\tilde{s} > 0$ and a real-analytic change of coordinates $T: \mathbb{R}^2 \to \mathbb{R}^2$ given by $(\xi, \eta) \mapsto (X, Y)$ such that for each $X \in \mathbb{T}$ and $|\lambda Y| \leq \tilde{s}$, the transformed map
\[
\tilde{F}^\phi := T \circ \bar{F}^\phi \circ T^{-1} : (X, Y) \mapsto (\tilde{X}, \tilde{Y})
\]
satisfies
\[
\left\{
\begin{array}{l}
\tilde{X} = X + \alpha + \bar{\alpha}_1 \cdot (\lambda Y) + \bar{\alpha}_2 \cdot (\lambda Y)^2 + O(\varepsilon^\gamma |\lambda Y|^3) + O(\varepsilon^\gamma |\nu|), \\
\tilde{Y} = \nu + \bar{\beta}_1 \cdot (\lambda Y) + \bar{\beta}_2 \cdot (\lambda Y)^2 + O(\varepsilon^\gamma |\lambda Y|^3) + O(\varepsilon^\gamma |\nu|).
\end{array}
\right.
\]
\end{Lemma}

The following technical result is taken from \cite[Lemma B.1]{Ma} and will be used in the construction of the coordinate change $T$:

\begin{Lemma}\label{jsl}
Let $\alpha$ be a $(D, \tau)$-Diophantine number (as defined in \eqref{diop}). Let $\mathcal{A}(\mathbb{T}_s)$ denote the space of holomorphic functions $g: \mathbb{T}_s \to \mathbb{C}$. Given constants $a, b \in \mathbb{R}$ with $b \neq 0$, and given $0 < \sigma < s$, for each $g \in \mathcal{A}(\mathbb{T}_s)$, there exists a unique $f \in \mathcal{A}(\mathbb{T}_{s - \sigma})$ with zero average, i.e., $\int_\mathbb{T} f(\theta)\, d\theta = 0$, and a unique constant $\mu \in \mathbb{R}$ such that
\[
\mu + a f(\theta + \alpha) - b f(\theta) = g(\theta), \quad \text{with} \quad \mu = \int_\mathbb{T} g(\theta)\, d\theta,
\]
and the solution satisfies the estimate
\[
\|f\|_{s-\sigma} \leq \frac{C}{D} \sigma^{-(\tau + 3)} \|g\|_s,
\]
where $C$ is a constant depending only on $\tau$.
\end{Lemma}

\subsubsection{Step 1: Elimination of the non-constant linear term}

According to Lemma~\ref{jsl}, by taking $a = b = 1$, the cohomological equation
\[
\int_{\mathbb{T}} \ln B_1(\xi)\, d\xi = \ln B_1(\xi) + U_1(\xi) - \ln U_1(\xi + \alpha)
\]
admits a unique solution $U_1 \in C^\omega(\mathbb{T})$. Since $B_1(\xi) = 1 + O(\varepsilon^\gamma)$, we deduce that
\[
U_1(\xi) = 1 + O(\varepsilon^\gamma), \quad \text{for all } \xi \in \mathbb{T}.
\]

We now define a coordinate transformation $T_1: (\xi, \eta) \mapsto (\xi, \eta_1)$ by
\[
\eta_1 = \frac{\eta}{U_1(\xi)},
\]
so that $T_1^{-1}(\xi, \eta_1) = (\xi, U_1(\xi)\eta_1)$. Let $\bar{\beta}_1 := \int_\mathbb{T} B_1(\xi)\, d\xi$. Then, for $|\lambda \eta_1| \leq \bar{s}$, the transformed map is given by
\[
\bar{F}^\phi_1 := T_1 \circ \bar{F}^\phi \circ T_1^{-1} : (\xi, \eta_1) \mapsto (\bar{\xi}, \bar{\eta}_1),
\]
where
\[
\left\{
\begin{array}{ll}
\bar{\xi} = \xi + \alpha + \alpha_1(\xi)(\lambda \eta_1) + \alpha_2(\xi)(\lambda \eta_1)^2 + O(\varepsilon^\gamma |\lambda \eta_1|^3), \\
\bar{\eta}_1 = \nu + \bar{\beta}_1 \cdot (\lambda \eta_1) + \beta_2(\xi)(\lambda \eta_1)^2 + O(\varepsilon^\gamma |\lambda \eta_1|^3) + O(\varepsilon^\gamma |\nu| |\lambda \eta_1|) + O(\varepsilon^\gamma |\nu|).
\end{array}
\right.
\]
Without loss of generality, we may assume $\bar{s} \leq 1$, and simplify the above to
\[
\bar{\eta}_1 = \nu + \bar{\beta}_1 \cdot (\lambda \eta_1) + \beta_2(\xi)(\lambda \eta_1)^2 + O(\varepsilon^\gamma |\lambda \eta_1|^3) + O(\varepsilon^\gamma |\nu|).
\]

Next, define $\bar{\alpha}_1 := \int_\mathbb{T} \alpha_1(\xi)\, d\xi$, and consider the change of coordinates $S_1: (\xi, \eta_1) \mapsto (\xi_1, \eta_1)$ given by
\[
\xi_1 = \xi + V_1(\xi) \eta_1,
\]
where $V_1(\xi)$ is the unique solution to the cohomological equation
\[
V_1(\xi + \alpha) - V_1(\xi) + \alpha_1(\xi) = \bar{\alpha}_1.
\]

Then the transformed map is
\[
\tilde{F}_1^\phi := S_1 \circ \bar{F}_1^\phi \circ S_1^{-1} : (\xi_1, \eta_1) \mapsto (\tilde{\xi}_1, \tilde{\eta}_1),
\]
with
\[
\left\{
\begin{array}{ll}
\tilde{\xi}_1 = \xi_1 + \alpha + \bar{\alpha}_1 \cdot (\lambda \eta_1) + \alpha_2(\xi_1)(\lambda \eta_1)^2 + O(\varepsilon^\gamma |\lambda \eta_1|^3) + O(\varepsilon^\gamma |\nu|), \\
\tilde{\eta}_1 = \nu + \bar{\beta}_1 \cdot (\lambda \eta_1) + \beta_2(\xi_1)(\lambda \eta_1)^2 + O(\varepsilon^\gamma |\lambda \eta_1|^3) + O(\varepsilon^\gamma |\nu|).
\end{array}
\right.
\]

\subsubsection{Step 2: Elimination of the non-constant quadratic term}

Let $\bar{\beta}_2 := \int_\mathbb{T} \beta_2(\xi)\, d\xi$. Consider the cohomological equation
\[
\bar{\beta}_1^2 U_2(\xi + \alpha) - \bar{\beta}_1 U_2(\xi) + \beta_2(\xi) = \bar{\beta}_2,
\]
which admits a unique solution $U_2 \in C^\omega(\mathbb{T})$. Define the coordinate transformation $T_2: (\xi_1, \eta_1) \mapsto (\xi_1, \eta_2)$ by
\[
\eta_2 = \eta_1 + U_2(\xi_1) \eta_1^2.
\]
This change transforms the non-constant coefficient $\beta_2(\xi_1)$ into the constant $\bar{\beta}_2$.

Similarly, the equation
\[
V_2(\xi + \alpha) - V_2(\xi) + \alpha_2(\xi) = \bar{\alpha}_2
\]
admits a unique solution $V_2 \in C^\omega(\mathbb{T})$. Define the coordinate transformation $S_2: (\xi_1, \eta_2) \mapsto (\xi_2, \eta_2)$ by
\[
\xi_2 = \xi_1 + V_2(\xi_1) \eta_2^2,
\]
which transforms the non-constant coefficient $\alpha_2(\xi_1)$ into the constant $\bar{\alpha}_2$.

Let $\tilde{F}_2^\phi := S_2 \circ T_2 \circ \tilde{F}_1^\phi \circ T_2^{-1} \circ S_2^{-1}$ denote the resulting map in the new coordinates.

Finally, by taking $\tilde{s}$ to be the minimum among $\bar{s}$ and the analyticity radii of $U_i$ and $V_i$ (for $i = 1, 2$), we complete the proof of Lemma~\ref{ketra} by relabeling the coordinates and the map:
\[
\tilde{F}_2^\phi \rightsquigarrow \tilde{F}^\phi, \quad (\xi_2, \eta_2) \rightsquigarrow (X, Y).
\]

 \subsection{Reduction of the perturbation}

Denote the integrable part of $\tilde{F}^\phi(X,Y)$ by $N(X,Y)$, that is,
\[
N(X,Y) = \left(X + \alpha + \bar{\alpha}_1 \cdot (\lambda Y) + \bar{\alpha}_2 \cdot (\lambda Y)^2,\; \nu + \bar{\beta}_1 \cdot (\lambda Y) + \bar{\beta}_2 \cdot (\lambda Y)^2\right).
\]
Consider the fixed point equation
\[
Y = \nu + \bar{\beta}_1 \cdot (\lambda Y) + \bar{\beta}_2 \cdot (\lambda Y)^2,
\]
whose solutions are given explicitly by
\[
Y_{\pm} = \frac{-(\bar{\beta}_1 \lambda - 1) \pm \sqrt{(\bar{\beta}_1 \lambda - 1)^2 - 4\nu \bar{\beta}_2 \lambda^2}}{2 \bar{\beta}_2 \lambda^2}.
\]
Recall that $\varepsilon = 1 - \lambda$. Then we have
\[
\bar{\beta}_1 \lambda - 1 = O(\varepsilon), \quad 4\nu \bar{\beta}_2 \lambda^2 = O(\varepsilon^{2\gamma}),
\]
which implies
\[
Y_{\pm} = O(\varepsilon^{\gamma - 1}).
\]

We now perform the final coordinate change $W : (X, Y) \mapsto (X, Z)$ defined by
\[
Z = Y - Y_+.
\]
Then the transformed map becomes
\[
\hat{F}^\phi := W \circ \tilde{F}^\phi \circ W^{-1} : (X, Z) \mapsto (\hat{X}, \hat{Z}).
\]
Using the identity
\[
Y_+ = \nu + \bar{\beta}_1 \cdot (\lambda Y_+) + \bar{\beta}_2 \cdot (\lambda Y_+)^2,
\]
we obtain the expansion
\[
\left\{
\begin{array}{ll}
\hat{X} = X + \alpha + \bar{\alpha}_1 \cdot (\lambda Y_+) + (1 + O(\varepsilon^\gamma)) \cdot (\lambda Z) + O(\varepsilon^\gamma Y_+^2) + O(\varepsilon^{2\gamma}), \\
\hat{Z} = (1 + O(\varepsilon^\gamma)) \cdot (\lambda Z) + O(\varepsilon^\gamma |Y_+|^3) + O(\varepsilon^{2\gamma}).
\end{array}
\right.
\]

\begin{Remark}
We could also perform the change of coordinates $Z = Y - Y_-$ instead of $Y_+$. Since $\gamma \in (1,2)$, the mixed and higher-order terms such as $O(\varepsilon^\gamma Y_+ Z)$, $O(\varepsilon^\gamma Z^2)$, and $O(\varepsilon^\gamma Y_+^2 Z)$ are absorbed into $O(\varepsilon^\gamma Z)$ and $O(\varepsilon^{2\gamma})$, respectively.
\end{Remark}

Note that the quantities $\bar{\alpha}_1 \cdot (\lambda Y_+)$, $O(\varepsilon^\gamma Y_+^2)$, and $O(\varepsilon^\gamma |Y_+|^3)$ are constants. Define
\[
\hat{\alpha}_1 := \alpha + \bar{\alpha}_1 \cdot (\lambda Y_+) + O(\varepsilon^\gamma Y_+^2), \quad
\hat{\alpha}_2 := O(\varepsilon^\gamma |Y_+|^3).
\]
Then the map $\hat{F}^\phi$ can be written in the form
\[
\hat{F}^\phi(X, Z) = \left(X + \hat{\alpha}_1 + \lambda_1 Z + \hat{\phi}_1(X),\; \hat{\alpha}_2 + \lambda_2 Z + \hat{\phi}_2(X)\right),
\]
where
\[
\lambda_1, \lambda_2 = (1 + O(\varepsilon^\gamma)) \cdot (1 - \varepsilon) = 1 - \varepsilon + O(\varepsilon^\gamma) + O(\varepsilon^{1 + \gamma}) < 1,
\]
and
\[
\|\hat{\phi}_i\|_{C^1} = O(\varepsilon^{2\gamma}), \quad i = 1, 2.
\]

However, in general we cannot guarantee that $\lambda_1 = \lambda_2$ or $\hat{\phi}_1 = \hat{\phi}_2$. Therefore, we invoke Remark~\ref{needb} here. Observe that
\[
O(\varepsilon^{2\gamma}) <O(\varepsilon^2) = \left(1 - \sqrt{\tilde{\lambda}}\right)^2,
\]
where $\tilde{\lambda} := \max\{\lambda_1, \lambda_2\}$.

By applying Theorems~\ref{M2} and \ref{M22} to the map $\hat{F}^\phi$, we conclude that it admits a unique $C^1$ invariant graph. This completes the proof of Theorem~\ref{M222}.

 \vspace{2em}

	\section{\sc The regularity of invariant graphs}\label{S5}
We provide a proof of Theorem~\ref{M3}. To this end, we construct an explicit example of a Lipschitz perturbation satisfying condition~(\ref{contrac}) such that the corresponding invariant Lipschitz graph is non-differentiable at certain points. This demonstrates that Theorems~\ref{M2} and~\ref{M22} do not imply each other.

\subsection{On the Denjoy counterexample}
Following \cite[Chapter X]{H11} (see also \cite{Ar11}), we construct a $C^1$ Denjoy counterexample $\bar{g}:\T\to\T$ as follows.

Let $\omega \in \mathbb{R}\setminus\mathbb{Q}$ be an irrational number, $\eps > 0$ a fixed parameter, and $N \gg 1$ a sufficiently large constant. Define a sequence $(\ell_k)_{k\in\mathbb{Z}}$ by:
\begin{equation}\label{l_k}
\ell_k := \frac{a_N}{(|k| + N)(\log(|k| + N))^{1+\eps}},
\end{equation}
where the normalization constant $a_N > 0$ is chosen so that $\sum_{k \in \mathbb{Z}} \ell_k = 1$.

Fix a $C^\infty$ bump function $\eta : \mathbb{R} \to \mathbb{R}$ satisfying:
\begin{itemize}
    \item $\eta \geq 0$,
    \item $\operatorname{supp}(\eta) \subset \left[\frac{1}{4}, \frac{3}{4}\right]$,
    \item $\int_0^1 \eta(t)\,dt = 1$.
\end{itemize}
For each $k \in \mathbb{Z}$, define the rescaled function:
\[
\eta_k(t) := \eta\left(\frac{t}{\ell_k}\right).
\]
Then $\int_0^{\ell_k} \eta_k(t)\,dt = \ell_k$, and there exist constants $C_1, C_2 > 0$ (depending only on $\eta$) such that:
\[
C_1 \leq \|\eta_k\|_{C^0} \leq C_2, \quad \text{and} \quad \frac{C_1}{\ell_k} \leq \|\eta_k'\|_{C^0} \leq \frac{C_2}{\ell_k}.
\]
From (\ref{l_k}), it follows that if $N$ is sufficiently large, then for all $k \in \mathbb{Z}$:
\begin{equation}\label{del_k}
\frac{C_1}{|k| + N} \leq \left|\frac{\ell_{k+1}}{\ell_k} - 1\right| \leq \frac{C_2}{|k| + N}.
\end{equation}

Now define a family of $C^\infty$ diffeomorphisms $b_k : [0, \ell_k] \to [0, \ell_{k+1}]$ by:
\[
\zeta_k(x) := \int_0^x \left[1 + \left(\frac{\ell_{k+1}}{\ell_k} - 1\right)\eta_k(t)\right] dt,
\]
so that $\zeta_k(\ell_k) = \ell_{k+1}$.

There exists a Cantor set $\mathcal{K} \subset \mathbb{T} := \mathbb{R}/\mathbb{Z}$ with $\operatorname{Leb}(\mathcal{K}) = 0$, whose complementary intervals $\{I_k\}_{k\in\mathbb{Z}}$ satisfy:
\begin{itemize}
    \item The ordering of $\{I_k\}$ on $\mathbb{T}$ follows the sequence $\{k\omega \mod 1\}_{k\in\mathbb{Z}}$;
    \item $\operatorname{length}(I_k) = \ell_k$ for each $k \in \mathbb{Z}$.
\end{itemize}

Define a semiconjugacy $j : \mathbb{T} \to \mathbb{T}$ to the rigid rotation $R_\omega(x) = x + \omega \mod 1$ via the probability measure:
\[
\bar{\mu} := \sum_{k\in\mathbb{Z}} \ell_k\, \delta_{k\omega},
\]
where $\delta_{k\omega}$ denotes the Dirac mass at $k\omega$. Then for $x \in \{k\omega\}_{k\in\mathbb{Z}}$, define:
\[
j^{-1}(x) := \int_0^x d\bar{\mu}(t).
\]

For each $I_k$, there exists $\lambda_k \in \mathbb{R}$ such that $R_{\lambda_k}(I_k) = [0, \ell_k]$. Define:
\[
\bar{g}_k := R_{\lambda_{k+1}} \circ \zeta_k \circ R_{-\lambda_k},
\]
so that $\bar{g}_k : I_k \to I_{k+1}$ is a $C^\infty$ diffeomorphism.

By \cite[Chapter X, (3.12)¨C(3.13)]{H11}, there exists a $C^1$ diffeomorphism $\bar{g} : \mathbb{T} \to \mathbb{T}$ such that:
\begin{itemize}
    \item $\bar{g}|_{I_k} = \bar{g}_k : I_k \to I_{k+1}$ (wandering intervals);
    \item $j \circ \bar{g} = R_\omega \circ j$ (semiconjugacy).
\end{itemize}
As a consequence:
\begin{itemize}
    \item $\bar{g}(\mathcal{K}) = \mathcal{K}$ (minimal invariant set);
    \item $\rho(\bar{g}) = \omega$ (rotation number).
\end{itemize}

By definition, the derivative of $\bar{g}$ on $I_k$ is:
\[
D\bar{g}_k = D\bar{g}|_{I_k} = \left( 1 + \left( \frac{\ell_{k+1}}{\ell_k} - 1 \right) \eta_k \right) \circ R_{-\lambda_k}.
\]
Furthermore, we obtain the following derivative estimates:
\begin{equation}\label{gkgkp}
\lim_{|k|\to\infty} \|D\bar{g}_k - 1\|_{C^0} = 0, \quad \text{and} \quad D\bar{g}(x) = 1, \quad \forall x \in \mathcal{K}.
\end{equation}

We denote by $g:\mathbb{R} \to \mathbb{R}$ the lift of $\bar{g} : \mathbb{T} \to \mathbb{T}$.

\subsection{Arnaud's modification}

Inspired by Arnaud \cite{Ar11}, we modify the Denjoy counterexample along a certain wandering orbit such that the resulting map $\bar{g}$ becomes non-differentiable at each point of that orbit. At the same time, we control the Lipschitz semi-norm of the perturbation $\phi : \T \to \R$ to ensure
\[
\|\phi\|_{\mathrm{Lip}} \leq (1 - \sqrt{\lambda})^2,
\]
where $\phi$ is defined as in Proposition \ref{hf1}. More specifically, let
\[
\varphi(x) := g(x) + \lambda g^{-1}(x) - (1 + \lambda)x,
\]
then set
\[
\phi(x) := \varphi(x) - \int_{\T} \varphi(x)\, dx.
\]
Consequently, the required dissipative twist map is given by
\begin{equation}\label{Fphi}
F^{\phi}(x, y) = \left(x + \alpha_1 + \lambda y + \phi(x),\ \lambda y + \phi(x)\right),
\end{equation}
where $\alpha_1 = \frac{1}{1 - \lambda} \int_{\T} \varphi(x)\, dx$.

Let $(I_k)_{k \in \mathbb{Z}}$ denote the family of connected components of $\mathbb{T} \setminus \mathcal{K}$, as defined previously. Fix a base point $x_0 \in I_0$ and consider its full orbit under $\bar{g}$:
\[
(x_k)_{k \in \mathbb{Z}} := (\bar{g}^k(x_0))_{k \in \mathbb{Z}}.
\]
Our goal is to construct a Lipschitz perturbation $\bar{h}$ of $\bar{g}$ such that:
\begin{enumerate}
    \item $\bar{h}|_{\mathcal{K}} \equiv \bar{g}|_{\mathcal{K}}$;
    \item $\bar{h}(x_k) = \bar{g}(x_k) = x_{k+1}$ for all $k \in \mathbb{Z}$.
\end{enumerate}
Let $h : \R \to \R$ denote the lift of $\bar{h} : \T \to \T$. For each $k \in \mathbb{Z}$, we introduce the following notation:
\begin{itemize}
    \item Interval decomposition:
    \[
    I_k := (a_k, b_k), \quad L_k := (a_k, x_k], \quad R_k := [x_k, b_k).
    \]
    \item Auxiliary function:
    \[
    W(x) := g(x) + \lambda g^{-1}(x) - (1 + \lambda)x.
    \]
    \item Derivative quantity:
    \[
    m_k := 1 + \lambda + DW(x_k),
    \]
    where $DW(x_k) := Dg(x_k) + \lambda Dg^{-1}(x_k) - (1 + \lambda)$. This implies the relation:
    \[
    Dg(x_k) + \frac{\lambda}{Dg(x_{k-1})} = m_k.
    \]
\end{itemize}

For each parameter $m > 2\sqrt{\lambda}$, define the M\"{o}bius-type transformation
\[
\Phi_m : (0, +\infty) \to (-\infty, m), \quad t \mapsto m - \frac{\lambda}{t},
\]
and denote by $\Phi_m^n(t)$ the $n$-fold composition of $\Phi_m$.

\begin{Proposition}[Properties of $\Phi_m$]\label{assyp}
For $m > 2\sqrt{\lambda}$, the map $\Phi_m$ satisfies the following:
\begin{enumerate}
    \item $\Phi_m$ is strictly increasing and bijective.
    \item When $m = 1 + \lambda$:
    \begin{itemize}
        \item The map has two fixed points: $p_- = \lambda$, $p_+ = 1$;
        \item If $t < \lambda$, then $\Phi_m^n(t) \to -\infty$ as $n \to +\infty$;
        \item If $t > \lambda$, then $\Phi_m^n(t) \to 1$ as $n \to +\infty$.
    \end{itemize}
    \item When $m \neq 1 + \lambda$:
    \begin{itemize}
        \item The map has two fixed points: $p_\pm = \frac{m \pm \sqrt{m^2 - 4\lambda}}{2}$;
        \item If $t < p_-$, then $\Phi_m^n(t) \to -\infty$ as $n \to +\infty$;
        \item If $t > p_-$, then $\Phi_m^n(t) \to p_+$ as $n \to +\infty$.
    \end{itemize}
\end{enumerate}
\end{Proposition}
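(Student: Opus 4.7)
The plan is to verify the three items of Proposition~\ref{assyp} through a direct analysis of the Möbius-type map $\Phi_m(t) = m - \lambda/t$. For item~(1), I would compute $\Phi_m'(t) = \lambda/t^2 > 0$ on $(0,+\infty)$, giving strict monotonicity, and then combine this with the boundary limits $\lim_{t\to 0^+}\Phi_m(t) = -\infty$ and $\lim_{t\to +\infty}\Phi_m(t) = m$ to conclude that $\Phi_m$ is a bijection from $(0,+\infty)$ onto $(-\infty,m)$.

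For items~(2) and~(3), I would first locate the fixed points by reducing $\Phi_m(t) = t$ to the quadratic $t^2 - mt + \lambda = 0$. The hypothesis $m > 2\sqrt{\lambda}$ makes the discriminant $m^2 - 4\lambda$ non-negative, giving the roots $p_\pm = (m \pm \sqrt{m^2-4\lambda})/2$, both positive by Vieta's formulas ($p_+ p_- = \lambda > 0$ and $p_+ + p_- = m > 0$). In the distinguished case $m = 1+\lambda$, the quadratic factors as $(t-1)(t-\lambda)$, giving $p_- = \lambda$ and $p_+ = 1$. The hyperbolicity classification then follows from the simple identity $\Phi_m'(p_\pm) = \lambda/p_\pm^2 = p_\mp/p_\pm$ (using $p_+ p_- = \lambda$), so that $p_+$ is attracting and $p_-$ is repelling.

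The global dynamical picture I would extract from the factorization
\[
\Phi_m(t) - t \;=\; -\,\frac{(t-p_-)(t-p_+)}{t}, \qquad t \in (0,+\infty).
\]
On $(p_-, p_+)$ this quantity is positive and on $(p_+, +\infty)$ it is negative; since $\Phi_m$ is increasing and fixes $p_\pm$, each of these intervals is forward invariant and the orbit is monotonic, hence converges to the unique fixed point $p_+$ in the invariant interval. For $t \in (0, p_-)$ the same factorization gives $\Phi_m(t) < t < p_-$, producing a strictly decreasing orbit; since there is no fixed point of $\Phi_m$ in $(0, p_-)$, this monotone sequence cannot have a positive limit and must therefore drop below the threshold $\lambda/m < p_-$ after finitely many iterations, at which step the next iterate becomes negative and can be made arbitrarily close to $-\infty$. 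The main subtlety, which I would address carefully, is that ``$\Phi_m^n(t)\to -\infty$'' must be read in this domain-escape sense: once an iterate crosses $0$, the next application of $\Phi_m$ lies outside the stated domain $(0,+\infty)$, so the statement records the orbit's exit from the positive axis rather than a genuine real-valued limit, in line with the projective picture where $\Phi_m$ is conjugate to the linear map $(u,v) \mapsto (mu - \lambda v, u)$ contracting onto the eigendirection $p_+$. Once this interpretive point is fixed, the remaining arguments reduce to elementary sign analysis of the quadratic $t^2 - mt + \lambda$.
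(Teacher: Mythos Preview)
The paper does not actually supply a proof of this proposition; it is stated as an elementary fact about the M\"obius-type map and then immediately used. Your plan is exactly the standard direct verification one would give: the derivative computation for monotonicity, the boundary limits for bijectivity, the quadratic $t^2 - mt + \lambda = 0$ for the fixed points together with Vieta's relations, and the sign analysis of
\[
\Phi_m(t) - t = -\,\frac{(t-p_-)(t-p_+)}{t}
\]
combined with monotone convergence on the invariant intervals $(p_-,p_+)$ and $(p_+,+\infty)$. All of this is correct.

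You are also right to flag the $t < p_-$ clause as the one delicate point. With the domain declared as $(0,+\infty)$, the orbit starting in $(0,p_-)$ is strictly decreasing and, since there is no fixed point in $[0,p_-)$, must in finitely many steps fall below $\lambda/m$ and then leave $(0,+\infty)$; the literal statement ``$\Phi_m^n(t)\to -\infty$ as $n\to+\infty$'' is therefore an informal shorthand for this domain escape rather than a genuine limit of an infinite orbit. Your projective remark is the clean way to say this. For the purposes of the paper this looseness is harmless: the only place Proposition~\ref{assyp} is invoked is in the proof of Lemma~\ref{forwap}, and there one uses exclusively the conclusion $\Phi_m^n(t)\to p_+$ for $t > p_-$, never the $t<p_-$ branch.
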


By item (a), the inverse $\Phi^{-1}_m$ is well-defined and strictly increasing.

\begin{Proposition}[Properties of $\Phi^{-1}_m$]
For $m > 2\sqrt{\lambda}$, the inverse map $\Phi^{-1}_m$ satisfies:
\begin{enumerate}
    \item When $m = 1 + \lambda$:
    \begin{itemize}
        \item Fixed points: $p^m_- = \lambda$, $p^m_+ = 1$;
        \item If $t < 1$, then $\Phi_m^n(t) \to \lambda$ as $n \to -\infty$;
        \item If $t > 1$, then $\Phi_m^n(t) \to +\infty$ as $n \to -\infty$.
    \end{itemize}
    \item When $m \neq 1 + \lambda$:
    \begin{itemize}
        \item Fixed points: $p^m_\pm = \frac{m \pm \sqrt{m^2 - 4\lambda}}{2}$;
        \item If $t < p^m_+$, then $\Phi_m^n(t) \to \lambda$ as $n \to -\infty$;
        \item If $t > p^m_+$, then $\Phi_m^n(t) \to +\infty$ as $n \to -\infty$.
    \end{itemize}
\end{enumerate}
\end{Proposition}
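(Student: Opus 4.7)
The plan is to derive the properties of $\Phi_m^{-1}$ directly from its explicit form, parallel to whatever argument proved the corresponding statement for $\Phi_m$ in Proposition~\ref{assyp}. First I would solve $s = m - \lambda/t$ for $t$, obtaining
\[
\Phi_m^{-1}(s) = \frac{\lambda}{m-s},
\]
which is itself a M\"obius transformation, strictly increasing on each of its two domain components. The fixed-point equation $s(m-s) = \lambda$ rearranges to $s^2 - ms + \lambda = 0$, which coincides with the fixed-point equation for $\Phi_m$; hence $\Phi_m^{-1}$ has the same fixed points $p^m_\pm$, reducing to $\lambda$ and $1$ in the degenerate case $m = 1 + \lambda$.

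Next I would establish the hyperbolic classification. Differentiation gives $(\Phi_m^{-1})'(s) = \lambda/(m-s)^2$, and using the Vi\`ete relations $p^m_- + p^m_+ = m$ and $p^m_- p^m_+ = \lambda$ these multipliers simplify to $(\Phi_m^{-1})'(p^m_-) = p^m_-/p^m_+ < 1$ and $(\Phi_m^{-1})'(p^m_+) = p^m_+/p^m_- > 1$. Consequently the attracting/repelling roles swap with respect to $\Phi_m$: the point $p^m_-$ becomes attracting and $p^m_+$ becomes repelling for $\Phi_m^{-1}$, which is the qualitative picture one expects since a forward orbit of $\Phi_m^{-1}$ is a backward orbit of $\Phi_m$.

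For the global convergence claims I would combine monotonicity of $\Phi_m^{-1}$ on each side of its pole with the sign analysis of
\[
\Phi_m^{-1}(s) - s = \frac{(s - p^m_-)(s - p^m_+)}{m - s}.
\]
On $(-\infty, p^m_+)$ this forces every forward orbit to be monotone with limit in that interval, and $p^m_-$ is the only fixed point there, which yields the attraction asserted in items (a) and (b). For initial data in $(p^m_+, m)$ the orbit is strictly increasing and accumulates at the pole $s = m$, which corresponds to the divergence to $+\infty$ stated in the proposition. Both cases specialize to item (a) via the substitution $p^m_- = \lambda$, $p^m_+ = 1$.

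The one delicate point is the precise interpretation of $(\Phi_m^{-1})^n(t) \to +\infty$ when $t > p^m_+$, since the naive iteration meets the pole $s = m$ in finite time. I would resolve this either by restricting to the maximal time interval on which the orbit stays below the pole, or more cleanly by passing to the M\"obius normal form $u = (s - p^m_-)/(s - p^m_+)$, under which $\Phi_m^{-1}$ conjugates to the linear contraction $u \mapsto (p^m_-/p^m_+)\, u$; from this normal form both the attraction to $p^m_-$ and the unbounded behavior on the side of $p^m_+$ are immediate consequences of elementary linear dynamics. The remainder is a routine monotone-sequence analysis paralleling the proof of Proposition~\ref{assyp}, and I do not anticipate any substantive obstacle beyond fixing this domain convention.
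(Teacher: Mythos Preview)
The paper offers no proof for this proposition (nor for the companion Proposition~\ref{assyp}); both are recorded as elementary facts about M\"obius maps and the text moves on immediately. Your proposal therefore supplies what the paper leaves implicit, and the argument you outline is correct: the explicit formula $\Phi_m^{-1}(s)=\lambda/(m-s)$, the shared fixed-point equation, the multiplier computation via Vi\`ete, and the sign analysis of $\Phi_m^{-1}(s)-s$ together give the full picture. Your use of the M\"obius normal form $u=(s-p_-^m)/(s-p_+^m)$ to handle the pole is cleaner than a bare monotone-sequence argument and removes the ambiguity you flagged.

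One remark: your analysis actually shows that for $t<p_+^m$ the backward orbit converges to $p_-^m$, whereas the proposition as printed in case (b) states the limit is $\lambda$. This appears to be a slip in the paper (copied from case (a), where $p_-^m=\lambda$); your argument establishes the correct limit $p_-^m$, and you may wish to note the discrepancy.
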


Select an initial value $\beta_0 \neq \alpha_0$ (see Lemma \ref{forwap} below). Define the two-sided sequence $(\beta_k)_{k \in \mathbb{Z}}$ by the recurrence:
\[
\beta_k :=
\begin{cases}
\Phi_{m_k}(\beta_{k-1}), & k \geq 1, \\
\Phi^{-1}_{m_{k+1}}(\beta_{k+1}), & k \leq -1.
\end{cases}
\]

Recall the notation $N$ in the definition of $\ell_k$ (see (\ref{l_k})). Fix $\lambda \in (0,1)$ and define
\[
n_{\pm} := 1 + \lambda \pm \frac{1}{N}.
\]
For sufficiently large $N$, we ensure that for all $k \in \mathbb{Z}$:
\begin{equation}\label{mppp}
|m_k - (1 + \lambda)| \leq \frac{1}{N}, \quad
|p^{n_{\pm}}_+ - 1| \leq \frac{1}{\sqrt{N}}, \quad
|p^{n_{\pm}}_- - \lambda| \leq \frac{1}{\sqrt{N}}.
\end{equation}
Additionally, we impose the condition
\begin{equation}\label{lacond}
\frac{1}{\sqrt{N}} < \min\left\{\lambda,\ \frac{1}{2}(\sqrt{\lambda} - \lambda)\right\},
\end{equation}
which implies
\[
N > \max\left\{\frac{1}{\lambda^2},\ \frac{4}{\sqrt{\lambda}(1 - \sqrt{\lambda})^2}\right\}, \quad
\lambda + \frac{1}{\sqrt{N}} < \sqrt{\lambda} < 1 - \frac{1}{\sqrt{N}}.
\]
As $\lambda \to 0^+$, it suffices to choose $N \geq \lambda^{-2 - \varepsilon}$. As $\lambda \to 1^-$, it is enough to take $N \geq (1 - \lambda)^{-2 - \varepsilon}$.

\begin{Lemma}[Asymptotic behavior]\label{forwap}
Let $N$ satisfy conditions \eqref{del_k}, \eqref{mppp}, and \eqref{lacond}. Given $\lambda \in (0,1)$, for each
\begin{equation}\label{beN}
\beta_0 \in \left( \lambda + \frac{1}{\sqrt{N}},\; 1 - \frac{1}{\sqrt{N}} \right),
\end{equation}
the following limits hold:
\[
\lim_{k \to +\infty} \beta_k = 1, \quad \lim_{k \to -\infty} \beta_k = \lambda.
\]
\end{Lemma}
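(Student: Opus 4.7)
The plan is a two-scale monotone comparison argument. The structural inputs I will use are: $\Phi_m(t)=m-\lambda/t$ is strictly increasing in both $t>0$ and $m$; by \eqref{gkgkp} one has $Dg(x_k),Dg^{-1}(x_k)\to 1$ as $|k|\to\infty$, hence $m_k\to 1+\lambda$ as $|k|\to\infty$; and \eqref{mppp} localizes $p_\pm^{m_k}$ within $1/\sqrt{N}$ of $\lambda$ and $1$ respectively. Below I give the forward case in detail; the backward case is symmetric.

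\textbf{Step 1 (coarse sandwich, forward).} By \eqref{mppp} and \eqref{beN}, $\beta_0>\lambda+1/\sqrt{N}\ge p_-^{m_k}$ for every $k$, so Proposition~\ref{assyp} guarantees that each single-$m$ orbit issued from $\beta_0$ converges to the corresponding $p_+^{m}$. Joint monotonicity of $\Phi_m$ in $(t,m)$, together with $m_k\in[n_-,n_+]$, yields by a one-line induction
\[
\Phi_{n_-}^{k}(\beta_0)\;\le\;\beta_k\;\le\;\Phi_{n_+}^{k}(\beta_0),\qquad k\ge 0.
\]
As $k\to\infty$ the two extremes converge to $p_+^{n_-}$ and $p_+^{n_+}$, both within $1/\sqrt{N}$ of $1$ by \eqref{mppp}, so $\beta_k$ is eventually trapped in an $O(1/\sqrt{N})$-neighborhood of $1$.

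\textbf{Step 2 (two-scale refinement).} Fix any $\delta>0$. Since $m_k\to 1+\lambda$, choose $K$ large enough that $|m_k-(1+\lambda)|<\delta$ for every $k\ge K$. Set $n^\delta_\pm:=1+\lambda\pm\delta$. By Step~1, $\beta_K$ sits in a compact subinterval of $(p_-^{n^\delta_+},\infty)$. Rerunning the same sandwich from $\beta_K$ with $n^\delta_\pm$ in place of $n_\pm$ gives
\[
\Phi_{n^\delta_-}^{k-K}(\beta_K)\;\le\;\beta_k\;\le\;\Phi_{n^\delta_+}^{k-K}(\beta_K),\qquad k\ge K,
\]
and the extremes converge to $p_+^{n^\delta_\pm}$. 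Since $p_+^m=(m+\sqrt{m^2-4\lambda})/2$ depends continuously on $m$ with $p_+^{1+\lambda}=1$, both $|p_+^{n^\delta_\pm}-1|$ vanish as $\delta\to 0^+$. Hence $\limsup_{k\to\infty}|\beta_k-1|$ can be made arbitrarily small, so $\beta_k\to 1$.

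\textbf{Step 3 (backward direction).} The same template applies to $\Phi_m^{-1}(t)=\lambda/(m-t)$, whose attracting fixed point is $p_-^m$. The only structural change is that $\Phi_m^{-1}$ is decreasing in $m$, which swaps the roles of $n_-$ and $n_+$ in the sandwich. The initial condition $\beta_0<1-1/\sqrt{N}\le p_+^{m_k}$ keeps each inverse orbit attracted to $p_-^{m_k}$, and the two-scale refinement, now using continuity of $p_-^m$ with $p_-^{1+\lambda}=\lambda$, yields $\beta_k\to\lambda$ as $k\to-\infty$.

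\textbf{Main obstacle.} The delicate point is Step~2: Step~1 alone pins $\beta_k$ down only to an $O(1/\sqrt{N})$-neighborhood of $1$, which is insufficient for the asserted exact limit. The refinement succeeds precisely because one is free to reset the sandwich arbitrarily far in the future (or past), using the decay $m_k\to 1+\lambda$ supplied by \eqref{gkgkp} to make the comparison tolerance tighter than any fixed $N$-dependent bound.
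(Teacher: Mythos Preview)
Your argument is correct and follows essentially the same two-scale comparison strategy as the paper: a coarse sandwich with $n_\pm=1+\lambda\pm 1/N$ to trap $\beta_k$ near $1$, followed by a refined sandwich with $1+\lambda\pm\delta$ (using $m_k\to 1+\lambda$) to squeeze the limit to exactly $1$, and the symmetric treatment for $k\to-\infty$ via $\Phi_m^{-1}$. One small slip: in Step~2 you should require $\beta_K>p_-^{n^\delta_-}$ (the larger of the two lower fixed points, since $p_-^m$ is decreasing in $m$) rather than merely $\beta_K>p_-^{n^\delta_+}$, so that the \emph{lower} comparison orbit $\Phi_{n^\delta_-}^{k-K}(\beta_K)$ indeed converges to $p_+^{n^\delta_-}$; this is harmless because Step~1 already places $\beta_K$ within $1/\sqrt{N}$ of $1$, well above both $p_-^{n^\delta_\pm}$ for small $\delta$.
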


\begin{Remark}
By \eqref{mppp} and \eqref{beN}, we have $\beta_0 < \alpha_0$.
\end{Remark}

\begin{proof}
We prove only that $\lim_{k \to +\infty} \beta_k = 1$, since the case $\lim_{k \to -\infty} \beta_k = \lambda$ follows similarly by considering the inverse map $\Phi_m^{-1}$. The argument is inspired by \cite[Lemma 1]{Ar11}.

For all $k \geq 0$, we observe:
\[
\beta_{k+1} = \Phi_{m_{k+1}}(\beta_k) = \Phi_{n_+}(\beta_k) + m_{k+1} - n_+ < \Phi_{n_+}(\beta_k),
\]
and similarly,
\[
\beta_{k+1} = \Phi_{n_-}(\beta_k) + m_{k+1} - n_- > \Phi_{n_-}(\beta_k).
\]
Thus, for all $k \geq 0$, we obtain the inequality:
\begin{equation}\label{leqq}
\Phi_{n_-}^k(\beta_0) \leq \beta_k \leq \Phi_{n_+}^k(\beta_0).
\end{equation}

Since $\beta_0 > p_-^{n_-}$ by \eqref{mppp} and \eqref{beN}, Proposition~\ref{assyp} implies that
\[
\Phi_{n_-}^k(\beta_0) \to p_+^{n_-} \quad \text{as } k \to +\infty.
\]
Therefore, there exists $K_1$ such that for all $k \geq K_1$,
\[
|\Phi_{n_-}^k(\beta_0) - 1| \leq \frac{1}{\sqrt{N}},
\]
which in turn implies
\begin{equation}\label{bkpln}
\beta_k > p_-^{n_-}.
\end{equation}

For any fixed $\delta > 0$, define
\[
m_+ := 1 + \lambda + \delta, \quad m_- := 1 + \lambda - \delta.
\]
Taking $\delta > 0$ sufficiently small ensures that
\[
n_- < m_- < m_+ < n_+.
\]
This yields the comparison:
\begin{equation}\label{pcompar}
p_-^{n_+} < p_-^{m_+} < p_-^{m_-} < p_-^{n_-} < p_+^{n_-} < p_+^{m_-} < p_+^{m_+} < p_+^{n_+}.
\end{equation}

By the definition of $m_k$, there exists $K_2$ such that for all $k \geq K_2$,
\[
|m_k - (1 + \lambda)| \leq \delta.
\]
Arguing as in \eqref{leqq}, we get for all $K \geq 0$,
\[
\Phi_{m_-}^K(\beta_k) \leq \beta_{k+K} \leq \Phi_{m_+}^K(\beta_k).
\]
Let $K^* := \max\{K_1, K_2\}$. Then for $k \geq K^*$, it follows from \eqref{bkpln} and \eqref{pcompar} that
\[
\beta_{K^*} > p_-^{m_-} > p_-^{m_+}.
\]
By Proposition~\ref{assyp}, we conclude
\[
\Phi_{m_\pm}^K(\beta_{K^*}) \to p_+^{m_\pm} \quad \text{as } K \to +\infty.
\]
Moreover, by definition,
\[
p_+^{m_\pm} \to 1 \quad \text{as } \delta \to 0^+.
\]
Therefore, we obtain the desired limit:
\[
\lim_{k \to +\infty} \beta_k = 1.
\]
This completes the proof of Lemma \ref{beN}.\end{proof}

\medskip

Let $N$ satisfy conditions \eqref{del_k}, \eqref{mppp}, and \eqref{lacond}. We now construct $h_N$ by modifying $g_N$ so that $h_N$ fails to be differentiable at each point in the wandering orbit $\{x_k\}_{k \in \mathbb{Z}}$. To this end, it suffices to modify only the right-hand derivative of $g_N$ at $x_k$. Specifically, we set:
\[
D\bar{h}_N^R(x_k) = \beta_k, \quad D\bar{h}_N^L(x_k) = Dg_N(x_k).
\]
For $x_k \in (a_k, b_k)$, define the midpoint
\[
c_k := \frac{x_k + b_k}{2},
\]
and let
\[
J_k := (x_k, c_k), \quad J := \bigcup_{k \in \mathbb{Z}} J_k.
\]
It is clear that the Cantor set $\mathcal{K} \subset J$. Recall that $h_N : \mathbb{R} \to \mathbb{R}$ is the lift of $\bar{h}_N : \mathbb{T} \to \mathbb{T}$. We define $\bar{h}_N$ as follows:
\begin{itemize}
\item For $x \in \mathbb{T} \setminus J$, set $\bar{h}_N(x) = \bar{g}_N(x)$.
\item For $x \in J_k$, define $\bar{h}_N$ to be of class $C^1$ with derivative $D\bar{h}_N(x)$ lying between $\beta_k$ and $D\bar{g}_N(c_k)$.
\end{itemize}
It is straightforward to verify that $h_N \in D^0(\mathbb{T})$.

\subsection{Construction of the perturbation}

Consider the dissipative twist map defined by
\[
F^{\psi_N}(x,y) = \left(x + \lambda y + \psi_N(x),\; \lambda y + \psi_N(x)\right).
\]
By Proposition~\ref{hf1}, the map $F^{\psi_N}$ admits an invariant graph $\Psi_N$ if and only if
\[
\psi_N(x) = h_N(x) + \lambda h_N^{-1}(x) - (1 + \lambda)x,
\]
where $h_N \in D^0(\T)$ takes the form
\begin{equation}\label{hlpss}
h_N(x) = x + \lambda \Psi_N(x) + \psi_N(x).
\end{equation}
Let us define
\[
\psi_N(x) := h_N(x) + \lambda h_N^{-1}(x) - (1 + \lambda)x.
\]
Note that
\[
Dg_N\big|_{\T} = D\bar{g}_N\big|_{\T}, \quad Dh_N\big|_{\T} = D\bar{h}_N\big|_{\T}.
\]
By the definition of $\beta_k$, we have
\[
D\psi^L_N(x_k) + (1 + \lambda) = Dg_N(x_k) + \frac{\lambda}{Dg_N(x_{k-1})} = \beta_k + \frac{\lambda}{\beta_{k-1}} = D\psi^R_N(x_k) + (1 + \lambda).
\]
It follows that $\psi_N$ is differentiable at $x_k$ for all $k \in \mathbb{Z}$.

By definition, if $N$ is sufficiently large, then for all $k \in \mathbb{Z}$,
\[
\lambda - \frac{1}{\sqrt{N}} \leq p^{n_-}_- \leq \beta_k \leq p^{n_+}_- \leq 1 + \frac{1}{\sqrt{N}}, \quad \|\bar{g}_N - \mathrm{Id}\|_{C^0} \leq \frac{1}{N}.
\]
It follows from the construction of $\bar{h}_N$ that for all $x \in J$,
\[
\min_{k \in \mathbb{Z}} \left\{ \beta_k,\; D\bar{g}_N(c_k) \right\} \leq Dh_N(x) \leq \max_{k \in \mathbb{Z}} \left\{ \beta_k,\; D\bar{g}_N(c_k) \right\},
\]
which, together with the identity $\bar{h}_N(x) = \bar{g}_N(x)$ for $x \in \T \setminus J$, implies that for all $x \in \T$,
\[
\lambda - \frac{1}{\sqrt{N}} \leq Dh_N(x) \leq 1 + \frac{1}{\sqrt{N}}.
\]
This gives rise to the estimate for the Lipschitz semi-norm:
\begin{equation}\label{lipsem}
\|\psi_N\|_{\mathrm{Lip}} = \|D\psi_N\|_{L^\infty} \leq \left| \sqrt{\lambda} + \frac{\lambda}{\sqrt{\lambda}} - (1 + \lambda) \right| = (1 - \sqrt{\lambda})^2.
\end{equation}

Note that for each $k \in \mathbb{Z}$, the function $\psi_N$ is of class $C^1$ on $(a_k, b_k) \setminus \{x_k\}$. Meanwhile, the derivative of $\psi_N$ is uniformly bounded on $(a_k, b_k)$ for all $k \in \mathbb{Z}$. It follows that $\psi_N$ is of class $C^1$ on $\T \setminus \mathcal{K}$. In view of~\eqref{hlpss}, we observe that the graph $\Psi_N$ is not differentiable at $x_k$ for all $k \in \mathbb{Z}$. Hence, by Theorem~\ref{M22}, $\psi_N$ is not of class $C^1$. Moreover, $\psi_N$ must have non-differentiable points within the Cantor set $\mathcal{K}$.

Define
\[
\phi_N(x) := \psi_N(x) - \int_{\T} \psi_N(x)\, dx.
\]
The required dissipative twist map is then given by
\begin{equation}
F_{\alpha_N}^{\phi_N}(x,y) = \left(x + \alpha_N + \lambda y + \phi_N(x),\; \lambda y + \phi_N(x)\right),
\end{equation}
with
\[
\alpha_N = \frac{1}{1 - \lambda} \int_{\T} \psi_N(x)\, dx.
\]
Consequently, we prove Item (II) and Item (III) of Theorem~\ref{M3}. It remains to prove Item (I). To make the dependence of $\psi_N$ on $g$ or $h$ explicit, we define for $x \in \T$:
\[
\psi_N^g(x) := g_N(x) + \lambda g_N^{-1}(x) - (1 + \lambda)x, \quad A_N^g := \int_{\T} \psi_N^g(x) \, dx, \quad \phi_N^g(x) := \psi_N^g(x) - A_N^g,
\]
\[
\psi_N^h(x) := h_N(x) + \lambda h_N^{-1}(x) - (1 + \lambda)x, \quad A_N^h := \int_{\T} \psi_N^h(x) \, dx, \quad \phi_N^h(x) := \psi_N^h(x) - A_N^h.
\]

By the construction of $h_N$, the following properties hold:
\begin{itemize}
    \item If $x \in \mathcal{K}$, then $g_N(x) = h_N(x)$, which implies
    \begin{equation} \label{arg1}
        A_N^h + \phi_N^h(x) = \psi_N^h(x) = \psi_N^g(x) = A_N^g + \phi_N^g(x).
    \end{equation}

    \item For each $k \in \Z$, if $x \in (a_k, b_k)$, then
    \begin{equation} \label{arg2}
        A_N^h + \phi_N^h(x) = A_N^g + \phi_N^g(a_k) + \int_{a_k}^x D\phi_N^h(t) \, dt.
    \end{equation}
\end{itemize}

We now claim that $A_N^h = A_N^g$. Suppose, for contradiction, that
\[
\Delta := A_N^h - A_N^g > 0.
\]
Since
\[
\| Dg_N - 1 \|_{C^0} = O\left(\frac{1}{N}\right), \quad \text{as } N \to +\infty,
\]
it follows that $\| D\psi_N^g \|_{C^0} = O\left(\frac{1}{N}\right)$. Therefore,
\[
\| \phi_N^g \|_{C^0} \leq \| D\phi_N^g \|_{C^0} = \| D\psi_N^g \|_{C^0} = O\left(\frac{1}{N} \right).
\]

Moreover, we observe:
\begin{itemize}
    \item If $x \in \mathcal{K}$, then from \eqref{arg1}, $\phi_N^g(x) - \phi_N^h(x) = \Delta > 0$.
    \item If $x \in \T \setminus \mathcal{K}$, then there exists $k \in \Z$ such that $x \in (a_k, b_k)$, and by (\ref{arg2}), we have
    \[
    \phi_N^g(x) - \phi_N^h(x) = \Delta + \phi_N^g(x) - \phi_N^g(a_k) - \int_{a_k}^x D\phi_N^h(t) \, dt.
    \]
\end{itemize}

By the inequality~\eqref{lipsem}, we have $\| D\phi_N^h \|_{L^\infty} \leq (1 - \sqrt{\lambda})^2$. Combined with
\[
\| \phi_N^g \|_{C^0} = O\left( \frac{1}{N} \right), \quad x - a_k \leq b_k - a_k = O\left( \frac{1}{N} \right),
\]
we deduce that for $x \in \T \setminus \mathcal{K}$, it holds that $\phi_N^g(x) - \phi_N^h(x) > 0$. Hence,
\[
\int_{\T} \phi_N^g(x) \, dx > \int_{\T} \phi_N^h(x) \, dx,
\]
contradicting the fact that both integrals vanish. Therefore, we conclude that $A_N^h = A_N^g$.

With this equality, it follows from \eqref{arg1} and \eqref{arg2} that $\| \phi_N^h \|_{C^0} = O\left( \frac{1}{N} \right)$. Consequently, Item (I) follows directly from the interpolation inequality:
\[
\| \phi_N^h \|_{C^{1 - \varepsilon}}\lesssim \| \phi_N^h \|_{C^0}^{\varepsilon} \left( \| \phi_N^h \|_{C^0} + \| \phi_N^h \|_{\mathrm{Lip}} \right)^{1 - \varepsilon}.
\]

This completes the proof of Theorem~\ref{M3}.

\vspace{2em}

 \noindent\textbf{Data Availability Statement.}
The authors state that this manuscript has no associated data and there is no conflict of interest.

\medskip

\end{document}